\theoremstyle{plain}
\newtheorem{thm}{\protect\theoremname}
\theoremstyle{remark}
\newtheorem{claim}{\protect\claimname}
\newenvironment{lyxlist}[1]
{\begin{list}{}
		{\settowidth{\labelwidth}{#1}
			\setlength{\leftmargin}{\labelwidth}
			\addtolength{\leftmargin}{\labelsep}
			}}
	{\end{list}}
\theoremstyle{plain}
\newtheorem{lem}{\protect\lemmaname}
\theoremstyle{plain}
\newtheorem{cor}{\protect\corollaryname}
\providecommand{\claimname}{Claim}
\providecommand{\corollaryname}{Corollary}
\providecommand{\lemmaname}{Lemma}
\providecommand{\theoremname}{Theorem}
\providecommand{\keywords}[1]
{
	\small	
	\textbf{\textit{Keywords---}} #1
}
\theoremstyle{definition}
\theoremstyle{definition}
\theoremstyle{definition}
\theoremstyle{definition}
\theoremstyle{definition}
\let\inf\relax 
\DeclareMathOperator*\inf{\vphantom{p}inf}
\title{Approximation by Finite Mixtures of Continuous Density Functions That Vanish at Infinity}
\author{T. Tin~Nguyen,~
	Hien D.~Nguyen$^\dagger$,~
	Faicel~Chamroukhi,\\
	~and~Geoffrey J.~McLachlan\thanks{T. Tin Nguyen and Faicel Chamroukhi are with department of Mathematics and Computer Science, Normandie University, UNICAEN, UMR CNRS LMNO, Caen, France. Hien D. Nguyen is with Department of Mathematics and Statistics, La Trobe University,
		Melbourne, Victoria, Australia. Geoffrey J.~McLachlan is with School of Mathematics and Physics, University of Queensland, St. Lucia, Brisbane, Australia. E-mail: trung-tin.nguyen@unicaen.fr, ~h.nguyen5@latrobe.edu.au,
		~faicel.chamroukhi@unicaen.fr, ~g.mclachlan@uq.edu.au. $^\dagger$Corresponding author.}
		\vspace{-8ex}
  \date{}
	}
\begin{document}
\maketitle
\begin{abstract}
	Given sufficiently many components, it is often cited that  finite
	mixture models can approximate any other probability density function
	(pdf) to an arbitrary degree of accuracy. Unfortunately, the nature
	of this approximation result is often left unclear. We prove that
	finite mixture models constructed from pdfs in $\mathcal{C}_{0}$
	can be used to conduct approximation of various classes of approximands
	in a number of different modes. That is, we prove approximands in
	$\mathcal{C}_{0}$ can be uniformly approximated, approximands in
	$\mathcal{C}_{b}$ can be uniformly approximated on compact sets,
	and approximands in $\mathcal{L}_{p}$ can be approximated with respect to the
	$\mathcal{L}_{p}$, for $p\in\left[1,\infty\right)$. Furthermore,
	we also prove that measurable functions can be approximated, almost
	everywhere.
\end{abstract}

\keywords{Approximation theory, probability density functions, finite mixture models, Riemann summation, uniform approximation.}

\section{Introduction}
Let $x$ be an element in the Euclidean space, defined by
$\mathbb{R}^{n}$ and the norm $\left\Vert \cdot\right\Vert _{2}$,
for some $n\in\mathbb{N}$. Let $f:\mathbb{R}^{n}\rightarrow\mathbb{R}$ be a function,
such that $f\ge0$, everywhere, and $\int f\text{d}\lambda=1$, where
$\lambda$ is the Lesbegue measure. We say that $f$ is a probability
density function (pdf) on the domain $\mathbb{R}^{n}$ (an expression
that we will drop, from hereon in). Let $g:\mathbb{R}^{n}\rightarrow\mathbb{R}$
be another pdf, and for each $m\in\mathbb{N}$, define the functional
class:
\begin{eqnarray*}
	\mathcal{M}_{m}^{g}&=&\left\{ h:h\left(x\right)=\sum_{i=1}^{m}c_{i}\frac{1}{\sigma_{i}^{n}}g\left(\frac{x-\mu_{i}}{\sigma_{i}}\right), \mu_{i}\in\mathbb{R}^{n}\text{, }\sigma_{i}\in\mathbb{R}_{+}\text{, }c\in\mathbb{S}^{m-1}\text{, }i\in\left[m\right]\right\} \text{,}
\end{eqnarray*}
where $c^\top=(c_1,\dots,c_m)$, $\mathbb{R}_{+}=\left(0,\infty\right)$,
\[
\mathbb{S}^{m-1}=\left\{ c\in\mathbb{R}^{m}:\sum_{i=1}^{m}c_{i}=1\text{ and }c_{i}\ge0,\forall i\in\left[m\right]\right\} \text{,}
\]
$\left[m\right]=\left\{ 1,\dots,m\right\} $, and $\left(\cdot\right)^{\top}$
is the matrix transposition operator. We say that any $h\in\mathcal{M}_{m}^{g}$
is a $m\text{-component}$ location-scale finite mixture
of the pdf $g$.

The study of pdfs in the class $\mathcal{M}_{m}^{g}$ is an evergreen
area of applied and technical research, in statistics. We point the interested reader to the many comprehensive books on the topic, such as \cite{Everitt1981},\cite{Titterington1985}, \cite{McLachlan1988}, \cite{Lindsay1995},
\cite{McLachlan2000}, \cite{Fruwirth-Schatter2006}, \cite{Schlattmann:2009aa},
\cite{mengersen2011mixtures},
and \cite{FruwirthShnatter:2019aa}.

Much of the popularity of finite mixture models stem from the folk
theorem, which states that for any density $f$, there exists an $h\in\mathcal{M}_{m}^{g}$,
for some sufficiently large number of components $m\in\mathbb{N}$,
such that $h$ approximates $f$ arbitrarily closely, in some sense.
Examples of this folk theorem come in statements such as: ``provided
the number of component densities is not bounded above, certain forms
of mixture can be used to provide arbitrarily close approximation
to a given probability distribution'' \cite[p. 50]{Titterington1985},
``the {[}mixture{]} model forms can fit any distribution and significantly
increase model fit'' \cite[p. 173]{WalkerBen-Akiva2011}, and ``a
mixture model can approximate almost any distribution'' \cite[p. 500]{Yona2011}.
Other statements conveying the same sentiment are reported in \cite{Nguyen:2019aa}.
There is a sense of vagary in the reported statements, and little
is ever made clear regarding the technical nature of the folk theorem.

In order to proceed, we require the following definitions. We say that $f$ is compactly supported on $\mathbb{K}\subset\mathbb{R}^{n}$,
if $\mathbb{K}$ is compact and if $\mathbf{1}_{\mathbb{K}^{\complement}}f=0$,
where $\mathbf{1}_{\mathbb{X}}$ is the indicator function that takes
value 1 when $x\in\mathbb{X}$ and $0$, elsewhere, and $\left(\cdot\right)^{\complement}$
is the set complement operator (i.e., $\mathbb{X}^{\complement}=\mathbb{R}^{n}\backslash\mathbb{X}$).
Here, $\mathbb{X}$ is a generic subset of $\mathbb{R}^{n}$. Furthermore,
we say that $f\in\mathcal{L}_{p}\left(\mathbb{X}\right)$ for any
$1\le p<\infty$, if
\[
\left\Vert f\right\Vert _{\mathcal{L}_{p}\left(\mathbb{X}\right)}=\left(\int\left|\mathbf{1}_{\mathbb{X}}f\right|^{p}\text{d}\lambda\right)^{1/p}<\infty\text{,}
\]
and for $p=\infty$, if
\begin{align*}
&\left\Vert f\right\Vert _{\mathcal{L}_{\infty}\left(\mathbb{X}\right)}=\inf\left\{a \ge 0: \lambda \left( \left\{x \in \mathbb{X}: \left|f(x)\right| > a\right\} \right) = 0 \right\}<\infty, 
\end{align*}
where we call $\left\Vert \cdot\right\Vert _{\mathcal{L}_{p}\left(\mathbb{X}\right)}$
the $\mathcal{L}_{p}\text{-norm}$ on $\mathbb{X}$. When $\mathbb{X}=\mathbb{R}^{n}$,
we shall write $\left\Vert \cdot\right\Vert _{\mathcal{L}_{p}\left(\mathbb{R}^{n}\right)}=\left\Vert \cdot\right\Vert _{\mathcal{L}_{p}}$.
In addition, we define the so-called Kullback-Leibler divergence, see \cite{KullbackLeibler1951}, 
between any two pdfs $f$ and $g$ on $\mathbb{X}$ as
\[
\text{KL}_{\mathbb{X}}\left(f,g\right)=\int\mathbf{1}_{\mathbb{X}}f\log\left(\frac{f}{g}\right)\text{d}\lambda\text{.}
\]

In \cite{Nguyen:2019aa}, the approximation of pdfs $f$ by the class
$\mathcal{M}_{m}^{g}$ was explored in a restrictive setting. Let
$\left\{ h_{m}^{g}\right\} $ be a sequence of functions that draw
elements from the nested sequence of sets $\left\{ \mathcal{M}_{m}^{g}\right\} $
(i.e., $h_{1}^{g}\in\mathcal{M}_{1}^{g},h_{2}^{g}\in\mathcal{M}_{2}^{g},\dots$).
The following result of \cite{ZeeviMeir1997} was presented in \cite{Nguyen:2019aa},
along with a collection of its implications, such as the results of
from \cite{LiBarron1999} and \cite{RakhlinPanchenkoMukherjee2005}.
\begin{thm}[Zeevi and Meir, 1997]\label{thm zeevi}If $$f\in\left\{ f:\mathbf{1}_{\mathbb{K}}f\ge\beta\text{, }\beta>0\right\} \cap\mathcal{L}_{2}\left(\mathbb{K}\right)$$
	and $g$ are pdfs and $\mathbb{K}$ is compact, then there exists
	a sequence $\left\{ h_{m}^{g}\right\} $ such that
	\[
	\lim_{m\rightarrow\infty}\left\Vert f-h_{m}^{g}\right\Vert _{\mathcal{L}_{2}\left(\mathbb{K}\right)}=0\text{ and }\lim_{m\rightarrow\infty}\text{KL}_{\mathbb{K}}\left(f,h_{m}^{g}\right)=0\text{.}
	\]
\end{thm}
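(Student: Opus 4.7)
The plan is to derive Theorem~\ref{thm zeevi} by combining an approximate-identity (mollifier) argument with a Riemann-sum discretization, and then to upgrade $\mathcal{L}_{2}$ convergence to Kullback--Leibler convergence using the uniform lower bound on $f$.

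For $\sigma > 0$, write $g_{\sigma}(x) = \sigma^{-n} g(x/\sigma)$, which is again a pdf. Extending $f$ by zero outside $\mathbb{K}$, classical results on approximate identities yield that the convolution $f * g_{\sigma}$ converges to $f$ in $\mathcal{L}_{2}(\mathbb{R}^{n})$, and hence in $\mathcal{L}_{2}(\mathbb{K})$, as $\sigma \downarrow 0$. Next, I discretize the convolution
\[
(f * g_{\sigma})(x) = \int_{\mathbb{K}} f(y)\, g_{\sigma}(x - y)\, \text{d}\lambda(y)
\]
by a Riemann sum: partition $\mathbb{K}$ into $m$ measurable cells $\mathbb{K}_{m,i}$ with centers $\mu_{m,i}$ and volumes $\Delta_{m,i} = \lambda(\mathbb{K}_{m,i})$, and set $c_{m,i} = f(\mu_{m,i}) \Delta_{m,i}$, $\sigma_{i} = \sigma$. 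After renormalizing the weights so that they sum to $1$ (a correction of order $o(1)$, since $\sum_{i} c_{m,i} \to \int_{\mathbb{K}} f\, \text{d}\lambda = 1$ as the mesh vanishes), the resulting function $h_{m}^{g}$ lies in $\mathcal{M}_{m}^{g}$.

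By the triangle inequality,
\[
\|f - h_{m}^{g}\|_{\mathcal{L}_{2}(\mathbb{K})} \le \|f - f * g_{\sigma}\|_{\mathcal{L}_{2}(\mathbb{K})} + \|f * g_{\sigma} - h_{m}^{g}\|_{\mathcal{L}_{2}(\mathbb{K})};
\]
the first term tends to zero as $\sigma \downarrow 0$, while the second is the Riemann-sum error, controlled by the modulus of continuity of $y \mapsto g_{\sigma}(x - y)$ times the mesh size. Choosing $\sigma_{m} \downarrow 0$ slowly and the mesh to shrink much faster drives both terms to zero simultaneously, giving the $\mathcal{L}_{2}$ conclusion. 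For the Kullback--Leibler part, the hypothesis $f \ge \beta > 0$ on $\mathbb{K}$ is essential. A sup-norm refinement of the Riemann-sum step (using that $g_{\sigma_{m}}$ is automatically bounded on compact translates of $\mathbb{K}$) yields $h_{m}^{g} \ge \beta/2$ on $\mathbb{K}$ for $m$ large, and then the standard chi-squared upper bound on KL gives
\[
\text{KL}_{\mathbb{K}}(f, h_{m}^{g}) \le \int_{\mathbb{K}} \frac{(f - h_{m}^{g})^{2}}{h_{m}^{g}}\, \text{d}\lambda \le \frac{2}{\beta}\, \|f - h_{m}^{g}\|_{\mathcal{L}_{2}(\mathbb{K})}^{2},
\]
which transfers the $\mathcal{L}_{2}$ convergence to convergence of the divergence.

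I expect the main obstacle to be the delicate coupling of the smoothing scale with the partition mesh: the Riemann-sum error for $g_{\sigma}$ deteriorates as $\sigma \downarrow 0$ because $g_{\sigma}$ becomes increasingly concentrated, so the mesh must refine much faster than $\sigma$. Quantifying this trade-off uniformly in $x \in \mathbb{K}$, with enough strength to also secure the pointwise lower bound $h_{m}^{g} \ge \beta/2$ needed for the KL step, is the main technical difficulty. A secondary subtlety is that no smoothness is assumed on $g$, so the modulus of continuity of $y \mapsto g_{\sigma}(x - y)$ is only accessible through the convolution/smoothing structure rather than through pointwise regularity of $g$ itself.
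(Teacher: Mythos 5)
First, you should know that the paper does not actually prove Theorem~\ref{thm zeevi}: it is quoted from Zeevi and Meir (1997) as background, and Section~\ref{Comments} records that the real proof rests on the Jones--Barron Hilbert-space approximation result (the $p=2$ case of Lemma~\ref{lem Donahue}), i.e., one shows that the object to be approximated lies in the closed convex hull of a bounded subset of $\mathcal{L}_{2}\left(\mathbb{K}\right)$ and invokes the $O\left(m^{-1/2}\right)$ convex-approximation rate, with the Kullback--Leibler part handled by the Li--Barron style argument. Your mollify-then-discretize route is therefore genuinely different from the route the theorem actually follows; it is much closer in spirit to the paper's Lemma~\ref{lem core lemma}, which the authors deploy only for continuous, compactly supported approximands.

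That mismatch is precisely where your argument breaks. (i) $f$ is only assumed to lie in $\mathcal{L}_{2}\left(\mathbb{K}\right)$ and to be bounded below; it is not continuous. Hence the coefficients $c_{m,i}=f\left(\mu_{m,i}\right)\Delta_{m,i}$ are not well defined (point values of an $\mathcal{L}_{2}$ equivalence class are meaningless) and a Riemann sum built from point evaluations need not converge; you would have to use cell averages $c_{m,i}=\int_{\mathbb{K}_{m,i}}f\,\text{d}\lambda$, which shifts the entire modulus-of-continuity burden onto the kernel --- but $g$ is also only assumed to be a pdf in this statement, with no continuity or boundedness, so that control is unavailable as well (you flag this difficulty but do not resolve it). (ii) More seriously, your KL step needs $h_{m}^{g}\ge\beta/2$ pointwise on $\mathbb{K}$, which you propose to extract from sup-norm closeness of $h_{m}^{g}$ to $f$. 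Since $f$ is not continuous, no uniform approximation of $f$ is possible; and even the smoothed proxy $f\star g_{\sigma}$ need not stay above $\beta/2$ near $\partial\mathbb{K}$, because at a boundary point $x$ the mass $\int_{\mathbb{K}}g_{\sigma}\left(x-y\right)\text{d}\lambda\left(y\right)$ tends to the local density of $\mathbb{K}$ at $x$, which can be arbitrarily small (corners, cusps). So the chi-squared bound does not close as stated. (iii) A smaller point: $\int_{\mathbb{K}}f\,\text{d}\lambda$ need not equal $1$ under the stated hypotheses, so the claimed $o(1)$ renormalization of the weights is not justified unless $f$ is additionally assumed to be supported on $\mathbb{K}$.
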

Although powerful, this result is restrictive in the sense that it
only permits approximation in the $\mathcal{L}_{2}$ norm on compact
sets $\mathbb{K}$, and that the result only allows for approximation
of functions $f$ that are strictly positive on $\mathbb{K}$. In
general, other modes of approximation are desirable, in particular
approximation in $\mathcal{L}_{p}\text{-norm}$ for $p=1$ or $p=\infty$
are of interest, where the latter case is generally referred to as
uniform approximation. Furthermore, the strict-positivity assumption,
and the restriction on compact sets limits the scope of applicability
of Theorem \ref{thm zeevi}. An example of an interesting application
of extensions beyond Theorem \ref{thm zeevi} is within the $\mathcal{L}_{1}\text{-norm}$
approximation framework of \cite{Devroye:2000aa}.

Let $g:\mathbb{R}^{n}\rightarrow\mathbb{R}$ again be a pdf. Then,
for each $m\in\mathbb{N}$, we define
\begin{align*}
\mathcal{N}_{m}^{g}&=\left\{ h:h\left(x\right)=\sum_{i=1}^{m}c_{i}\frac{1}{\sigma_{i}^{n}}g\left(\frac{x-\mu_{i}}{\sigma_{i}}\right)\text{, }\mu_{i}\in\mathbb{R}^{n}\text{, }\sigma_{i}\in\mathbb{R}_{+}\text{, }c_i\in\mathbb{R}\text{, }i\in\left[m\right]\right\} \text{,}
\end{align*}

which we call the set of $m\text{-component}$ location-scale linear
combinations of the pdf $g$. In the past, results regarding approximations
of pdfs $f$ via functions $\eta\in\mathcal{N}_{m}^{g}$ have been
more forthcoming. For example, in the case of $g=\phi$, where
\begin{equation}
\phi\left(x\right)=\left(2\pi\right)^{-n/2}\exp\left(-\left\Vert x\right\Vert _{2}^{2}/2\right)\text{,}\label{eq: gaussian pdf}
\end{equation}
is the standard normal pdf. Denoting the class of continuous functions
with support on $\mathbb{R}^{n}$ by $\mathcal{C}$. We have the result
that for every pdf $f$, compact set $\mathbb{K}\subset\mathbb{R}^{n}$,
and $\epsilon>0$, there exists an $m\in\mathbb{N}$ and $h\in\mathcal{N}_{m}^{\phi}$,
such that $\left\Vert f-h\right\Vert _{\mathcal{L}_{\infty}\left(\mathbb{K}\right)}<\epsilon$
\cite[Lem. 1]{Sandberg:2001aa}. Furthermore, upon defining the set
of continuous functions that vanish at infinity by
\begin{align*}
\mathcal{C}_{0}&=\left\{ f\in\mathcal{C}:\forall\epsilon>0,\exists\text{ a compact }\mathbb{K}\subset\mathbb{R}^{n}\text{,} \text{ such that}\left\Vert f\right\Vert _{\mathcal{L}_{\infty}\left(\mathbb{K}^{\complement}\right)}<\epsilon
\right\} \text{,}
\end{align*}

we also have the result: for every pdf $f\in\mathcal{C}_{0}$ and
$\epsilon>0$, there exists an $m\in\mathbb{N}$ and $h\in\mathcal{N}_{m}^{\phi}$,
such that $\left\Vert f-h\right\Vert _{\mathcal{L}_{\infty}}<\epsilon$
\cite[Thm. 2]{Sandberg:2001aa}. Both of the results from \cite{Sandberg:2001aa}
are simple implications of the famous Stone-Weierstrass theorem (cf.
\cite{Stone1948} and \cite{De-Branges:1959aa}).

To the best of our knowledge, the strongest available claim that is
made regarding the folk theorem, within a probabilistic or statistical
context, is that of \cite[Thm. 33.2]{DasGupta2008}. Let $\left\{ \eta_{m}^{g}\right\} $
be a sequence of functions that draw elements from the nested sequence
of sets $\left\{ \mathcal{N}_{m}^{g}\right\} $, in the same manner
as $\left\{ h_{m}^{g}\right\} $. We paraphrase the claim without
loss of fidelity, as follows.
\begin{claim}\label{claim dasgupta}If $f,g\in\mathcal{C}$ are pdfs and $\mathbb{K}\subset\mathbb{R}^{n}$
	is compact, then there exists a sequence $\left\{ \eta_{m}^{g}\right\} $,
	such that
	\[
	\lim_{m\rightarrow\infty}\left\Vert f-\eta_{m}^{g}\right\Vert _{\mathcal{L}_{\infty}\left(\mathbb{K}\right)}=0\text{.}
	\]
\end{claim}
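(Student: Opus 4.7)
The strategy is the classical two-step ``mollify, then discretize'' argument: first approximate $f$ uniformly on $\mathbb{K}$ by a convolution of a truncated version of $f$ against the rescaled kernel $g_{\sigma}(z)=\sigma^{-n}g(z/\sigma)$, then approximate that convolution by a Riemann sum which, by construction, is an element of $\mathcal{N}_{m}^{g}$. Because we only need control on $\mathbb{K}$, I would begin by replacing $f$ with a compactly supported surrogate: pick a continuous $\chi:\mathbb{R}^{n}\to[0,1]$ with $\chi\equiv 1$ on $\mathbb{K}$ and $\chi\equiv 0$ outside some compact set $\mathbb{K}''\supset\mathbb{K}$, and set $\tilde f=\chi f$. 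Then $\tilde f$ coincides with $f$ on $\mathbb{K}$ and, being continuous with compact support, is bounded and uniformly continuous on $\mathbb{R}^{n}$.

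\textbf{Mollification step.} For $\sigma>0$ consider
\[
(\tilde f*g_{\sigma})(x)=\int \tilde f(x-\sigma w)\,g(w)\,\mathrm{d}\lambda(w).
\]
I would show that $\tilde f*g_{\sigma}\to \tilde f$ uniformly on $\mathbb{R}^{n}$ as $\sigma\downarrow 0$ by splitting the integral at $\|w\|_{2}=R$. The tail piece is bounded by $2\|\tilde f\|_{\mathcal{L}_{\infty}}\int_{\|w\|_2>R}g\,\mathrm{d}\lambda$, which is $<\epsilon/4$ for $R$ large since $g\in\mathcal{L}_{1}$. Given that $R$, the head piece is at most $\omega_{\tilde f}(\sigma R)$, the modulus of continuity of $\tilde f$, which is $<\epsilon/4$ for $\sigma$ small. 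Hence $\|\tilde f*g_{\sigma}-\tilde f\|_{\mathcal{L}_{\infty}}<\epsilon/2$, and in particular on $\mathbb{K}$.

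\textbf{Riemann-sum step.} Fix that small $\sigma$ and partition $\mathbb{K}''$ into $N$ disjoint cubes $Q_{1},\dots,Q_{N}$ of equal volume $\delta^{n}$ with centers $\mu_{i}$. Define
\[
\eta_{N}(x)=\sum_{i=1}^{N}\tilde f(\mu_{i})\,\delta^{n}\,g_{\sigma}(x-\mu_{i})=\sum_{i=1}^{N}c_{i}\,\sigma^{-n}g\!\left(\tfrac{x-\mu_{i}}{\sigma}\right),
\]
with $c_{i}=\delta^{n}\tilde f(\mu_{i})\in\mathbb{R}$ and $\sigma_{i}=\sigma$, so $\eta_{N}\in\mathcal{N}_{N}^{g}$. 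The integrand $(x,y)\mapsto \tilde f(y)g_{\sigma}(x-y)$ is continuous on the compact product $\mathbb{K}\times\mathbb{K}''$, hence uniformly continuous; this makes the Riemann sums converge to $(\tilde f*g_{\sigma})(x)$ uniformly in $x\in\mathbb{K}$, so the mesh $\delta$ can be chosen small enough that $\|\tilde f*g_{\sigma}-\eta_{N}\|_{\mathcal{L}_{\infty}(\mathbb{K})}<\epsilon/2$. Triangle inequality, together with $\tilde f=f$ on $\mathbb{K}$, yields $\|f-\eta_{N}\|_{\mathcal{L}_{\infty}(\mathbb{K})}<\epsilon$, and letting $\epsilon\downarrow 0$ produces the required sequence $\{\eta_{m}^{g}\}$.

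\textbf{Main obstacle.} The delicate step is the mollification, because neither $g$ nor $f$ is assumed to have compact support or to be bounded globally; a continuous pdf can fail to be uniformly continuous and $g$ may have heavy tails that make the naive convolution bound via $\|\tilde f\|_{\mathcal{L}_{\infty}}\|g_{\sigma}\|_{\mathcal{L}_{1}}$ useless for pointwise control. Truncating $f$ through $\chi$ cures the first issue by restoring uniform continuity, and the two-scale split (``first choose $R$ using integrability of $g$, then choose $\sigma$ using uniform continuity of $\tilde f$'') cures the second. Once that is in place, the Riemann-sum discretization is a standard consequence of uniform continuity of the two-variable integrand on a compact rectangle.
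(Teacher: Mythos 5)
Your proof is essentially correct, but you should be aware of an unusual feature of the situation: the paper does \emph{not} prove Claim~\ref{claim dasgupta}. It quotes it from \cite{DasGupta2008}, traces the citation chain back to \cite[Lem.~3.1]{Xu:1993aa}, and argues that the chain is broken because Xu's proof rests on the unjustified assertion that ``there is no loss of generality in assuming $f(x)=0$ for $x\in\mathbb{R}^{n}\backslash 2\mathbb{K}$,'' so that the Riemann-sum technique only applies to compactly supported approximands. What you have done is supply exactly the missing justification: since the error is measured in $\left\Vert\cdot\right\Vert_{\mathcal{L}_{\infty}(\mathbb{K})}$ and $\tilde{f}=\chi f$ agrees with $f$ on $\mathbb{K}$, replacing $f$ by $\tilde{f}$ genuinely loses nothing, and $\tilde{f}$ is automatically bounded and uniformly continuous. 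Your order of operations --- truncate \emph{first}, then mollify --- is the key point, and it is what lets you get away with only $f,g\in\mathcal{C}$: the two-scale split in the mollification step needs only $g\in\mathcal{L}_{1}$ with $\int g\,\text{d}\lambda=1$ plus uniform continuity and boundedness of $\tilde{f}$, and the Riemann-sum step needs only continuity of $(x,y)\mapsto\tilde{f}(y)g_{\sigma}(x-y)$ on the compact product. Contrast this with the paper's closest analogue, Theorem~\ref{thm main res}(b), which mollifies $f$ itself (hence needs $f\in\mathcal{C}_{b}$ via Lemma~\ref{lem makarov 9.3.3}), truncates afterwards in $\mathcal{L}_{1}$ (Lemma~\ref{lem Lesbegue dom con theorem}), and controls the swap via Young's inequality with $\left\Vert g\right\Vert_{\mathcal{L}_{\infty}}$ (hence needs $g\in\mathcal{C}_{0}$); in exchange the paper obtains the much stronger conclusion that the approximant lies in $\mathcal{M}_{m}^{g}$, i.e.\ has nonnegative weights summing to one, which requires the extra ``mass-dumping'' component with a vanishing scale in Lemma~\ref{lem core lemma}. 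Your coefficients $c_{i}=\delta^{n}\tilde{f}(\mu_{i})$ are nonnegative but sum only to approximately $\int\chi f\,\text{d}\lambda\le 1$, which is fine for $\mathcal{N}_{m}^{g}$ but would not suffice for $\mathcal{M}_{m}^{g}$ without that additional device.

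Two small points to tighten. First, an arbitrary compact $\mathbb{K}''$ cannot be partitioned into congruent cubes; take $\mathbb{K}''$ to be a large closed cube containing $\supp\chi$ (harmless, since $\tilde{f}$ vanishes off $\supp\chi$), or use the ball-covering construction of Lemma~\ref{lem covering} with Borel cells of small diameter, as the paper does. Second, to convert your $\epsilon$-indexed approximants into an actual sequence $\left\{\eta_{m}^{g}\right\}$ with $\eta_{m}^{g}\in\mathcal{N}_{m}^{g}$ for every $m$, note that $\mathcal{N}_{m}^{g}\subseteq\mathcal{N}_{m+1}^{g}$ (pad with a zero coefficient), so the accuracy is monotone in $m$ after reindexing. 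Neither point is a genuine gap.
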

Unfortunately, the proof of Claim \ref{claim dasgupta} is not provided
within \cite{DasGupta2008}. The only reference of the result is
to an undisclosed location in \cite{CheneyLight2000}, which, upon
investigation, can be inferred to be Theorem 5 of \cite[Ch. 20]{CheneyLight2000}.
It is further notable that there is no proof provided for the theorem.
Instead, it is stated that the proof is similar to that of Theorem
1 in \cite[Ch. 24]{CheneyLight2000}, which is a reproduction of
the proof for \cite[Lem. 3.1]{Xu:1993aa}.

There is a major problem in applying the proof technique of \cite[Lem. 3.1]{Xu:1993aa}
in order to prove Claim \ref{claim dasgupta}. The proof of \cite[Lem. 3.1]{Xu:1993aa}
critically depends upon the statement that ``there is no loss of
generality in assuming that $f\left(x\right)=0$ for $x\in\mathbb{R}^{n}\backslash2\mathbb{K}$''.
Here, for $a\in\mathbb{R}_{+}$, $a\mathbb{K}=\left\{ x\in\mathbb{R}^{n}:x=ay\text{, }y\in\mathbb{K}\right\} $.
The assumption is necessary in order to write any convolution with
$f$ and an arbitrary continuous function as an integral over a compact
domain, and then to use a Riemann sum to approximate such an integral.
Subsequently, such a proof technique does not work outside the class
of continuous functions that are compactly supported on $a\mathbb{K}$.
Thus, one cannot verify Claim \ref{claim dasgupta} from the materials
of \cite{Xu:1993aa}, \cite{CheneyLight2000}, and \cite{DasGupta2008},
alone.

Some recent results in the spirit of Claim \ref{claim dasgupta} have
been obtained by \cite{Nestoridis:2007aa} and \cite{Nestoridis:2011aa},
using methods from the study of universal series (see for example in  \cite{Nestoridis:2005aa}).

Let
\[
\mathcal{W}=\left\{ f\in\mathcal{C}_{0}:\sum_{y\in\mathbb{Z}^{n}}\sup_{x\in\left[0,1\right]^{n}}\text{ }\left|f\left(x+y\right)\right|<\infty\right\} 
\]
denote the so-called Wiener's algebra (see, e.g., \cite{Feichtinger:1977aa})
and let
\begin{align*}
\mathcal{V}&=\left\{ f\in\mathcal{C}_{0}:\forall x\in\mathbb{R}^{n}\text{, }\left|f\left(x\right)\right|\le\beta\left(1+\left\Vert x\right\Vert _{2}\right)^{-n-\theta}\text{, } \beta,\theta\in\mathbb{R}_{+}\right\} 
\end{align*}
be a class of functions with tails decaying at a faster rate than
$o\left(\left\Vert x\right\Vert _{2}^{n}\right)$. In \cite{Nestoridis:2011aa},
it is noted that $\mathcal{V}\subset\mathcal{W}$. Further, let
\[
\mathcal{C}_{c}=\left\{ f\in\mathcal{C}:\exists\text{ a compact set }\mathbb{K}\text{, such that }\mathbf{1}_{\mathbb{K}^{\complement}}f=0\right\} \text{,}
\]
denote the set of compactly supported continuous functions. The following
theorem was proved in \cite{Nestoridis:2007aa}.
\begin{thm}
	[Nestoridis and Stefanopoulos, 2007, Thm. 3.2] \label{thm nestoridis2007}If
	$g\in\mathcal{V}$, then the following statements hold.
	\begin{lyxlist}{00.00.0000}
		\item [{(a)}] For any $f\in\mathcal{C}_{c}$, there exists a sequence $\left\{ \eta_{m}^{g}\right\} $ ($\eta_{m}^{g}\in\mathcal{N}_{m}^{g}$),
		such that
		\[
		\lim_{m\rightarrow\infty}\left\Vert f-\eta_{m}^{g}\right\Vert _{\mathcal{L}_{1}}+\left\Vert f-\eta_{m}^{g}\right\Vert _{\mathcal{L}_{\infty}}=0.
		\]
		\item [{(b)}] For any $f\in\mathcal{C}_{0}$, there exists a sequence $\left\{ \eta_{m}^{g}\right\} $ ($\eta_{m}^{g}\in\mathcal{N}_{m}^{g}$),
		such that
		\[
		\lim_{m\rightarrow\infty}\left\Vert f-\eta_{m}^{g}\right\Vert _{\mathcal{L}_{\infty}}=0.
		\]
		\item [{(c)}] For any $1\le p<\infty$ and $f\in\mathcal{L}_{p}$, there
		exists a sequence $\left\{ \eta_{m}^{g}\right\} $ ($\eta_{m}^{g}\in\mathcal{N}_{m}^{g}$), such that
		\[
		\lim_{m\rightarrow\infty}\left\Vert f-\eta_{m}^{g}\right\Vert _{\mathcal{L}_{p}}=0.
		\]
		\item [{(d)}] For any measurable $f$, there exists a sequence $\left\{ \eta_{m}^{g}\right\} $ ($\eta_{m}^{g}\in\mathcal{N}_{m}^{g}$),
		such that
		\[
		\lim_{m\rightarrow\infty}\eta_{m}^{g}=f\text{, almost everywhere.}
		\]
		\item [{(e)}] If $\nu$ is a $\sigma\text{-finite}$ Borel measure on $\mathbb{R}^{n}$,
		then for any $\nu\text{-measurable}$ $f$, there exists a sequence
		$\left\{ \eta_{m}^{g}\right\} $ ($\eta_{m}^{g}\in\mathcal{N}_{m}^{g}$), such that
		\[
		\lim_{m\rightarrow\infty}\eta_{m}^{g}=f,\] almost everywhere, with respect to $\nu$.
	\end{lyxlist}
\end{thm}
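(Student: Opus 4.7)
The plan is to reduce all five parts of the theorem to part (a) by density arguments, and to prove (a) by combining a mollification step with a Riemann-sum discretisation. The role of the hypothesis $g\in\mathcal{V}$ is to make $g_\sigma(x):=\sigma^{-n}g(x/\sigma)$ behave like an approximate identity as $\sigma\downarrow 0$, and to give enough decay at infinity to turn a statement that is uniform on compacta into a global $\mathcal{L}_\infty$ or $\mathcal{L}_p$ statement.

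For (a), I would fix $f\in\mathcal{C}_c$ with $\supp f\subset\mathbb{K}_0$ and $\varepsilon>0$. After a harmless rescaling, assume $\int g\,\mathrm{d}\lambda=1$ (this non-degeneracy is implicit: if $\int g=0$ then every $\eta\in\mathcal{N}_m^g$ would have integral $0$ and the theorem would already fail, e.g.\ for pdfs in $\mathcal{L}_1$). Standard approximate-identity arguments give a $\sigma_0>0$ such that
\[
\|f-f*g_\sigma\|_{\mathcal{L}_\infty}+\|f-f*g_\sigma\|_{\mathcal{L}_1}<\varepsilon/2
\quad\text{for all }0<\sigma\le\sigma_0.
\]
Fix such a $\sigma$ and observe that, since $f$ vanishes off $\mathbb{K}_0$,
\[
(f*g_\sigma)(x)=\int_{\mathbb{K}_0}f(y)g_\sigma(x-y)\,\mathrm{d}\lambda(y)
\]
is an integral over a compact domain. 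Partition $\mathbb{K}_0$ into $m$ cells $C_1,\dots,C_m$ with sample points $y_i\in C_i$ and volumes $\Delta y_i=\lambda(C_i)$, and form the Riemann sum
\[
\eta_m^g(x)=\sum_{i=1}^{m}f(y_i)\Delta y_i\,g_\sigma(x-y_i),
\]
which lies in $\mathcal{N}_m^g$ with $c_i=f(y_i)\Delta y_i$, $\mu_i=y_i$, and common scale $\sigma$. Uniform continuity of $(x,y)\mapsto g_\sigma(x-y)$ on compact sets drives $\|(f*g_\sigma)-\eta_m^g\|_{\mathcal{L}_\infty}$ below $\varepsilon/2$ on any fixed compact once the mesh is fine enough; the $\mathcal{V}$-decay $|g(x)|\le\beta(1+\|x\|_2)^{-n-\theta}$ extends this to all of $\mathbb{R}^n$ and, simultaneously, bounds the $\mathcal{L}_1$ error by a tail-truncation estimate. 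A diagonal argument over $\sigma\to 0$ and grid refinement produces the required sequence.

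Parts (b)--(e) follow by density. For (b), given $f\in\mathcal{C}_0$, pick a cutoff $\chi\in\mathcal{C}_c$ with $\chi\equiv1$ on a large enough compact set so that $\|f-f\chi\|_{\mathcal{L}_\infty}$ is arbitrarily small, then apply (a) to $f\chi\in\mathcal{C}_c$. For (c), invoke density of $\mathcal{C}_c$ in $\mathcal{L}_p$ for $1\le p<\infty$ to reduce to an $\tilde f\in\mathcal{C}_c$, and apply (a) while noting that the Riemann-sum argument also yields $\mathcal{L}_p$-control through the same tail estimate using the $(n+\theta)$-decay. For (d), Lusin's theorem yields, for each $k\in\mathbb{N}$, a function $f_k\in\mathcal{C}_c$ agreeing with $f$ off a set $E_k$ of Lebesgue measure less than $2^{-k}$; apply (a) (or (c) with $p=1$) to pick $\eta_{m_k}^g\in\mathcal{N}_{m_k}^g$ with $\lambda(\{|f_k-\eta_{m_k}^g|>2^{-k}\})<2^{-k}$, and Borel--Cantelli yields $\eta_{m_k}^g\to f$ almost everywhere. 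For (e), the argument is identical after invoking the $\sigma$-finite version of Lusin's theorem with respect to $\nu$ (decompose $\mathbb{R}^n$ into countably many sets of finite $\nu$-measure and diagonalise).

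The main obstacle is extending the Riemann-sum convergence from compact subsets of $\mathbb{R}^n$, where uniform continuity of the kernel immediately suffices, to the full space with the global norms demanded by the theorem. Without the polynomial decay encoded in the definition of $\mathcal{V}$, the function $\eta_m^g$ could fail to track $f*g_\sigma$ far from $\mathbb{K}_0$, and the $\mathcal{L}_1$ (or $\mathcal{L}_p$) error could have uncontrolled tails. It is precisely the bound $|g(x)|\le\beta(1+\|x\|_2)^{-n-\theta}$ that makes those tail contributions summable and uniform in the sample points $y_i$, and this is the one place where the structure of $\mathcal{V}$ (rather than mere membership in $\mathcal{C}_0$) is indispensable; handling it carefully, while keeping the mollification and discretisation errors jointly small, is where the bulk of the technical work sits.
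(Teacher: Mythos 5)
You should first note that the paper does not actually prove this statement: Theorem \ref{thm nestoridis2007} is quoted, with attribution, from Nestoridis and Stefanopoulos (2007) as background, so there is no in-paper proof to compare against. That said, your sketch is essentially the same mollify-then-discretise scheme that the paper itself adapts when proving its own Theorem \ref{thm main res}: your approximate-identity step corresponds to Lemma \ref{lem makarov 9.3.3} and Corollary \ref{cor cheney ch20 th4}, your Riemann-sum over a partition of the (rescaled) support of $f$ is Lemma \ref{lem core lemma}, your cutoff for part (b) is Lemma \ref{lem urysohn implication}, and your polar-coordinate tail estimate from $|g(x)|\le\beta(1+\|x\|_{2})^{-n-\theta}$ is exactly the computation in the proof of Theorem \ref{thm main res}(f); the interpolation/density reduction for (c) and the Lusin-plus-Borel--Cantelli argument for (d)--(e) are likewise standard and sound. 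Two points of comparison are worth recording. First, you slightly misattribute the role of the $\mathcal{V}$-decay: the global $\mathcal{L}_{\infty}$ control of the Riemann-sum error needs only that $g\in\mathcal{C}_{0}$ is uniformly continuous on all of $\mathbb{R}^{n}$ (Lemma \ref{lem C0 in Cb}), since the error at every $x$ is bounded by $\left\Vert f\right\Vert _{\mathcal{L}_{\infty}}\lambda\left(\mathbb{K}_{0}\right)$ times the modulus of continuity of $g_{\sigma}$ at the mesh size, uniformly in $x$; the decay is genuinely indispensable only for the $\mathcal{L}_{1}$ (hence $\mathcal{L}_{p}$) tails, which is precisely why the paper needs $g\in\mathcal{V}$ only for its part (f). Second, for (d)--(e) your route via Lusin's theorem is the complete one for arbitrary measurable $f$, whereas the paper's treatment of the analogous parts of Theorem \ref{thm main res} deduces them from part (a) via Lemma \ref{lem Bartle}, an argument that as written only covers $f\in\mathcal{C}_{0}$; your version is therefore closer to what the cited source actually does. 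The only loose end is your normalisation remark: in this paper $g$ is a pdf by standing assumption, so $\int g\,\text{d}\lambda=1$ needs no rescaling, and your parenthetical that the theorem ``would already fail'' if $\int g\,\text{d}\lambda=0$ is justified for the $\mathcal{L}_{1}$ statements (where $\left\Vert f-\eta\right\Vert _{\mathcal{L}_{1}}\ge\left|\int f\,\text{d}\lambda\right|$ whenever $\int\eta\,\text{d}\lambda=0$) but is not an argument for the $\mathcal{L}_{\infty}$ ones.
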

The result was then improved upon, in \cite{Nestoridis:2011aa},
whereupon the more general space $\mathcal{W}$ was taken as a replacement
for $\mathcal{V}$, in Theorem \ref{thm nestoridis2007}. Denote the
class of bounded continuous functions by $\mathcal{C}_{b}=\mathcal{C}\cap\mathcal{L}_{\infty}$.
The following theorem was proved in \cite{Nestoridis:2011aa}.
\begin{thm}[Nestoridis et al., 2011, Thm. 3.2]
	\label{thm nestoridis 2011}If
	$g\in\mathcal{W}$, then the following statements are true.
	\begin{lyxlist}{00.00.0000}
		\item [{(a)}] The conclusion of Theorem \ref{thm nestoridis2007}(a) holds,
		with $\mathcal{C}_{c}$ replaced by $\mathcal{C}_{0}\cap\mathcal{L}_{1}$.
		\item [{(b)}] The conclusions of Theorem \ref{thm nestoridis2007}(b)--(e)
		hold.
		\item [{(c)}] For any $f\in\mathcal{C}_{b}$ and compact $\mathbb{K}\subset\mathbb{R}^{n}$,
		there exists a sequence $\left\{ \eta_{m}^{g}\right\} $, such that
		\[
		\lim_{m\rightarrow\infty}\left\Vert f-\eta_{m}^{g}\right\Vert _{\mathcal{L}_{\infty}\left(\mathbb{K}\right)}=0.
		\]
	\end{lyxlist}
\end{thm}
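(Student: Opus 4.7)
The plan is to prove part (a) first and then deduce parts (b) and (c) via density and cutoff arguments. For part (a), I would reduce to the case $f \in \mathcal{C}_c$ and then combine mollification with Riemann summation. For the reduction, given $f \in \mathcal{C}_0 \cap \mathcal{L}_1$ and $\epsilon > 0$, pick a compact $\mathbb{K}_0$ large enough that both $\|f\|_{\mathcal{L}_\infty(\mathbb{K}_0^{\complement})} < \epsilon$ (possible since $f \in \mathcal{C}_0$) and $\int_{\mathbb{K}_0^{\complement}} |f| \,\text{d}\lambda < \epsilon$ (possible since $f \in \mathcal{L}_1$), then multiply $f$ by a continuous cutoff $\chi \colon \mathbb{R}^n \to [0,1]$ that equals $1$ on $\mathbb{K}_0$ and has compact support. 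The product $\chi f \in \mathcal{C}_c$ is within $\epsilon$ of $f$ simultaneously in $\mathcal{L}_1$ and $\mathcal{L}_\infty$, so by the triangle inequality it suffices to prove the claim for $f \in \mathcal{C}_c$.

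For the core step, set $g_\sigma(x) = \sigma^{-n} g(x / \sigma)$. Standard mollifier theory gives $\|g_\sigma * f - f\|_{\mathcal{L}_1} \to 0$ for any $f \in \mathcal{L}_1$ (using $g \in \mathcal{W} \subset \mathcal{L}_1$ and $\int g\,\text{d}\lambda = 1$), and $\|g_\sigma * f - f\|_{\mathcal{L}_\infty} \to 0$ as $\sigma \to 0$ when $f$ is uniformly continuous, which holds for any $f \in \mathcal{C}_c$. Fixing $\sigma > 0$ small, partition a cube containing $\supp f$ into subcubes $Q_1, \dots, Q_m$ of side length $\delta$ with centers $y_i$, and set
\[
\eta_{\sigma, \delta}(x) = \sum_{i=1}^{m} \delta^n f(y_i)\, g_\sigma(x - y_i) \in \mathcal{N}_m^g,
\]
with $c_i = \delta^n f(y_i)$, $\mu_i = y_i$, $\sigma_i = \sigma$. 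On each $Q_i$ the pointwise integrand-difference splits as
\[
f(y) g_\sigma(x - y) - f(y_i) g_\sigma(x - y_i) = f(y)\bigl[g_\sigma(x - y) - g_\sigma(x - y_i)\bigr] + \bigl[f(y) - f(y_i)\bigr] g_\sigma(x - y_i).
\]
The first summand yields an $\mathcal{L}_\infty$-error of order $\|f\|_{\mathcal{L}_1}\, \omega_{g_\sigma}(\sqrt{n}\,\delta)$ and an $\mathcal{L}_1$-error of order $\|f\|_{\mathcal{L}_1}\, \sup_{|h| \le \sqrt{n}\,\delta} \|g_\sigma - g_\sigma(\cdot - h)\|_{\mathcal{L}_1}$, both vanishing as $\delta \to 0$ by uniform continuity of $g_\sigma \in \mathcal{C}_0$ and continuity of translation in $\mathcal{L}_1$. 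The second summand yields an $\mathcal{L}_1$-error bounded by $\omega_f(\sqrt{n}\,\delta) \cdot |\supp f|$ and an $\mathcal{L}_\infty$-error bounded by $\omega_f(\sqrt{n}\,\delta) \cdot \sup_x \sum_i \delta^n |g_\sigma(x - y_i)|$. The main technical step, and the principal place where $g \in \mathcal{W}$ is used, is a uniform-in-$x$ bound on $\sum_i \delta^n |g_\sigma(x - y_i)|$ as $\delta \to 0$: rescaling the lattice $\delta \mathbb{Z}^n$, this reduces to the Wiener-algebra inequality $\sum_{k \in \mathbb{Z}^n} \sup_{u \in [0,1]^n} |g(u + k)| < \infty$, yielding a constant bound depending only on $g$. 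A diagonal choice $\sigma \to 0$, $\delta \to 0$ then produces the desired approximating sequence.

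Parts (b) and (c) of the theorem follow from (a) by density and cutoff arguments. For Theorem \ref{thm nestoridis2007}(b)--(e): (b) uses the $\mathcal{L}_\infty$-density of $\mathcal{C}_c$ in $\mathcal{C}_0$; (c) uses the $\mathcal{L}_p$-density of $\mathcal{C}_c$ in $\mathcal{L}_p$ combined with the interpolation $\|h\|_{\mathcal{L}_p} \le \|h\|_{\mathcal{L}_\infty}^{1 - 1/p} \|h\|_{\mathcal{L}_1}^{1/p}$; and (d)--(e) follow by approximating a measurable $f$ by truncations $f \cdot \mathbf{1}_{\{|f| \le n\} \cap B(0, n)} \in \mathcal{L}_1$, applying (a), and extracting an almost-everywhere-convergent subsequence via a diagonal argument (the $\nu$-measure case is handled the same way once $\nu$-integrable truncations are chosen). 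For part (c) of the current theorem, given $f \in \mathcal{C}_b$ and compact $\mathbb{K}$, multiply $f$ by a continuous cutoff equal to $1$ on $\mathbb{K}$ and compactly supported to obtain $\tilde f \in \mathcal{C}_c$ with $\tilde f = f$ on $\mathbb{K}$, apply part (a) to $\tilde f$, and conclude from $\|f - \eta_m^g\|_{\mathcal{L}_\infty(\mathbb{K})} = \|\tilde f - \eta_m^g\|_{\mathcal{L}_\infty(\mathbb{K})} \le \|\tilde f - \eta_m^g\|_{\mathcal{L}_\infty} \to 0$. The overarching technical obstacle throughout is the uniform $x$-control of the Riemann-sum tail $\sum_i \delta^n |g_\sigma(x - y_i)|$, which is precisely what the Wiener-algebra hypothesis on $g$ delivers.
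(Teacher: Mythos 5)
The paper does not prove Theorem \ref{thm nestoridis 2011}: it is quoted as background, with an explicit attribution to Nestoridis et al.\ (2011, Thm.\ 3.2), so there is no in-paper proof to compare against. The closest analogue is the paper's own proof of Theorem \ref{thm main res}, and your reconstruction follows essentially that architecture --- cut off to $\mathcal{C}_{c}$ via Urysohn, mollify, then Riemann-sum the convolution --- with one structural difference worth noting: you take Riemann weights $c_{i}=\delta^{n}f(y_{i})$, which is what forces you to control $\sup_{x}\sum_{i}\delta^{n}g_{\sigma}(x-y_{i})$ via the Wiener-algebra bound, whereas the paper's Lemma \ref{lem core lemma} takes $c_{i}=\int_{\mathbb{A}_{i}}h(z/k)\,\mathrm{d}\lambda/k^{n}$, which eliminates that second error term entirely and bounds the whole sup-norm error by $w(g,\delta)\,k^{n}\|h\|_{\mathcal{L}_{1}}$ using only uniform continuity of $g\in\mathcal{C}_{0}$. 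Both choices are legitimate here since $g\in\mathcal{W}$ is hypothesized; your lattice-counting argument reducing $\tau^{n}\sum_{k}g(k+v)$ to $\sum_{j\in\mathbb{Z}^{n}}\sup_{u\in[0,1]^{n}}|g(u+j)|$ is correct and is indeed where $\mathcal{W}$ earns its keep in the simultaneous $\mathcal{L}_{1}$-plus-$\mathcal{L}_{\infty}$ estimate. Parts (a), (b), (c) of Theorem \ref{thm nestoridis2007} and part (c) of the present theorem are then correctly derived by density, interpolation, and restriction to $\mathbb{K}$.

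The one genuine gap is in your treatment of parts (d)--(e). The truncations $f\cdot\mathbf{1}_{\{|f|\le n\}\cap\mathbb{B}(0,n)}$ lie in $\mathcal{L}_{1}$ but are not continuous, so part (a) cannot be applied to them directly; you must either insert Lusin's theorem to replace each truncation by a $\mathcal{C}_{c}$ function agreeing with it off a set of small measure, or route through the $\mathcal{L}_{1}$ statement of part (c) with $p=1$ and extract an a.e.-convergent subsequence by a Borel--Cantelli argument. More seriously, the sentence ``the $\nu$-measure case is handled the same way'' does not hold as written: $\mathcal{L}_{1}(\lambda)$ convergence gives no control over $\nu$-a.e.\ behaviour for an arbitrary $\sigma$-finite Borel measure $\nu$ (which may be singular with respect to $\lambda$). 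Part (e) needs a separate argument --- e.g., uniform approximation on compact sets exhausting $\mathbb{R}^{n}$ combined with Lusin's theorem for $\nu$, which is how the paper itself deduces its own statements (d) and (e) from the uniform result (a) via Lemma \ref{lem Bartle}. That deduction is cleaner than yours and would close the gap: uniform convergence implies a.e.\ convergence with respect to every measure simultaneously.
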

Utilizing the techniques from \cite{Nestoridis:2007aa}, \cite{Bacharoglou:2010aa}
proved a similar set of results to Theorem \ref{thm nestoridis2007},
under the restriction that $f$ is a non-negative function with support
$\mathbb{R}$, using $g=\phi$ (i.e. $g$ has form (\ref{eq: gaussian pdf}),
where $n=1$) and taking $\left\{ h_{m}^{\phi}\right\} $ as the approximating
sequence, instead of $\left\{ \eta_{m}^{g}\right\} $. That is, the
following result is obtained.
\begin{thm}
	[Bacharoglou, 2010, Cor. 2.5] \label{thm Bacharoglou 2010}If $f:\mathbb{R}\rightarrow\mathbb{R}_{+}\cup\left\{ 0\right\} $,
	then the following statements are true.
	\begin{lyxlist}{00.00.0000}
		\item [{(a)}] For any pdf $f\in\mathcal{C}_{c}$, there exists a sequence
		$\left\{ h_{m}^{\phi}\right\} $ ($h_{m}^{\phi}\in\mathcal{M}_{m}^{\phi}$), such that
		\[
		\lim_{m\rightarrow\infty}\left\Vert f-h_{m}^{\phi}\right\Vert _{\mathcal{L}_{1}}+\left\Vert f-h_{m}^{\phi}\right\Vert _{\mathcal{L}_{\infty}}=0.
		\]
		\item [{(b)}] For any $f\in\mathcal{C}_{0}$, such that $\left\Vert f\right\Vert _{\mathcal{L}_{1}}\le1$,
		there exists a sequence $\left\{ h_{m}^{\phi}\right\} $ ($h_{m}^{\phi}\in\mathcal{M}_{m}^{\phi}$), such that
		\[
		\lim_{m\rightarrow\infty}\left\Vert f-h_{m}^{\phi}\right\Vert _{\mathcal{L}_{\infty}}=0.
		\]
		\item [{(c)}] For any $1<p<\infty$ and $f\in\mathcal{C}\cap\mathcal{L}_{p}$,
		such that $\left\Vert f\right\Vert _{\mathcal{L}_{1}}\le1$, there exists a sequence $\left\{ h_{m}^{\phi}\right\} $ ($h_{m}^{\phi}\in\mathcal{M}_{m}^{\phi}$),
		such that
		\[
		\lim_{m\rightarrow\infty}\left\Vert f-h_{m}^{\phi}\right\Vert _{\mathcal{L}_{p}}=0.
		\]
		\item [{(d)}] For any measurable $f$, there exists a sequence $\left\{ h_{m}^{\phi}\right\} $ ($h_{m}^{\phi}\in\mathcal{M}_{m}^{\phi}$),
		such that
		\[
		\lim_{m\rightarrow\infty}h_{m}^{\phi}=f\text{, almost everywhere.}
		\]
		\item [{(e)}] For any pdf $f\in\mathcal{C}$, there exists a sequence $\left\{ h_{m}^{\phi}\right\} $ ($h_{m}^{\phi}\in\mathcal{M}_{m}^{\phi}$),
		such that
		\[
		\lim_{m\rightarrow\infty}\left\Vert f-h_{m}^{\phi}\right\Vert _{\mathcal{L}_{1}}=0.
		\]
	\end{lyxlist}
\end{thm}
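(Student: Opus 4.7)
The plan is to exploit the non-negativity of $f$ by realizing each target mixture as a Riemann-sum quadrature of a Gaussian convolution $f\ast\phi_{\sigma}$, where $\phi_{\sigma}(x)=\sigma^{-1}\phi(x/\sigma)$ and $\phi$ is the standard normal density. Fix $R>0$ large, partition $[-R,R]$ into $N$ cells of width $h_{N}$ with midpoints $y_{i}$, and form
\[
S_{N,\sigma}(x)=\sum_{i=1}^{N}c_{i}\,\phi_{\sigma}(x-y_{i}),\qquad c_{i}:=f(y_{i})\,h_{N}\geq 0,
\]
which has automatically non-negative weights. Two limits drive the convergence: $\left\Vert S_{N,\sigma}-f\ast\phi_{\sigma}\right\Vert \to 0$ as $N\to\infty$ by uniform continuity of $y\mapsto\phi_{\sigma}(x-y)$, and $\left\Vert f-f\ast\phi_{\sigma}\right\Vert \to 0$ as $\sigma\to 0$ by classical mollifier theory in whichever mode is required. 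The remaining unit-mass constraint defining $\mathcal{M}_{m}^{\phi}$ is enforced by appending one extra component carrying the deficit $\delta:=1-\sum_{i}c_{i}$ (possibly after a small downward $O(h_{N})$ adjustment of the $c_{i}$ to guarantee $\delta\geq 0$) with very large bandwidth $\sigma_{0}$. This absorber contributes at most $\delta(\sigma_{0}\sqrt{2\pi})^{-1}$ in $\mathcal{L}_{\infty}$ and its $\mathcal{L}_{p}$ contribution is $\delta\,\sigma_{0}^{(1-p)/p}\left\Vert \phi\right\Vert _{\mathcal{L}_{p}}$, which is controllable by making $\sigma_{0}$ large when $p>1$ or $p=\infty$, and by driving $\delta\to 0$ when $p=1$.

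\textbf{Specializations.} Part~(a) is the cleanest case: $f\in\mathcal{C}_{c}$ is uniformly continuous, $\int f=1$ forces $\delta\to 0$, and both $\mathcal{L}_{\infty}$ and $\mathcal{L}_{1}$ errors vanish simultaneously. For part~(b), $f\in\mathcal{C}_{0}$ with $\left\Vert f\right\Vert _{\mathcal{L}_{1}}\leq 1$ is first replaced by a compactly supported continuous truncation with small $\mathcal{L}_{\infty}$-tail error (using the defining $\mathcal{C}_{0}$ property), after which the absorber delivers uniform approximation, its $\mathcal{L}_{\infty}$-contribution vanishing as $\sigma_{0}\to\infty$ even though $\delta\not\to 0$ in general. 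Part~(c) runs the same argument in $\mathcal{L}_{p}$-norm, the absorber's contribution vanishing precisely because $p>1$. Part~(e) is a soft corollary of~(a): smoothly truncate the continuous pdf $f$ to a compactly supported continuous pdf (dominated convergence for $\mathcal{L}_{1}$-closeness, renormalization for unit mass) and invoke~(a). Part~(d) proceeds by diagonalization: approximate $f$ almost everywhere by $f_{n}:=\min(f,n)\mathbf{1}_{[-n,n]}$ (further mollified into $\mathcal{C}_{c}$ if needed), apply~(a)~or~(b) to each $f_{n}$ with $\mathcal{L}_{1}$-error $\leq 2^{-n}$, and extract an a.e.-convergent subsequence via Borel--Cantelli.

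\textbf{Main obstacle.} The central difficulty is the simultaneous enforcement of non-negativity \emph{and} unit mass inherent to $\mathcal{M}_{m}^{\phi}$; the signed-coefficient settings of Theorems~\ref{thm nestoridis2007}--\ref{thm nestoridis 2011} avoid this issue entirely. Naive renormalization $c_{i}\mapsto c_{i}/\sum_{j}c_{j}$ is inadequate, because it introduces a non-vanishing $(1-\left\Vert f\right\Vert _{\mathcal{L}_{1}})\left\Vert f\right\Vert _{\mathcal{L}_{\infty}}$ error whenever $\left\Vert f\right\Vert _{\mathcal{L}_{1}}<1$. The wide-Gaussian absorber is the crucial device that sidesteps this obstruction, and verifying that its contribution is sufficiently small uniformly in the remaining parameters---particularly the tight $\mathcal{L}_{\infty}$-bound $(\sigma_{0}\sqrt{2\pi})^{-1}$---is the technical crux of the argument, and it is this device that necessitates the hypothesis $\left\Vert f\right\Vert _{\mathcal{L}_{1}}\leq 1$ appearing in parts~(b) and~(c).
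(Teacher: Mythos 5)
Your proposal is essentially the paper's own approach: this theorem is quoted from Bacharoglou (2010) and not reproved here, but the paper's proof of its generalization (Theorem \ref{thm main res}) uses exactly the machinery you describe --- mollification by Gaussian/dilate convolutions $g_{k}\star f$ (Lemma \ref{lem approx ident}), a Riemann-sum discretization over a partition of a compact set with automatically non-negative weights, and a single flat, wide-bandwidth component absorbing the mass deficit $c_{m}=1-\sum_{i<m}c_{i}$ (the choice $k_{m}^{n}=\epsilon/(2c_{m}C)$ in Lemma \ref{lem core lemma} is precisely your absorber). Your identification of the non-negativity-plus-unit-mass constraint as the crux, and of the wide component as the device that resolves it, matches the paper's treatment, so no substantive divergence to report.
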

To the best of our knowledge, Theorem \ref{thm Bacharoglou 2010}
is the most complete characterization of the approximating capabilities
of the mixture of normal distributions. However, it is restrictive
in two ways. First, it does not permit characterization of approximation
via the class $\mathcal{M}_{m}^{g}$ for any $g$ except the normal
pdf $\phi$. Although $\phi$ is traditionally the most common choice
for $g$ in practice, the modern mixture model literature has seen
the use of many more exotic component pdfs, such as the student-\emph{t}
pdf and its skew and modified variants (see, e.g., \cite{Peel:2000aa},
\cite{Forbes:2013aa}, and \cite{Lee:2016aa}). Thus, its use
is somewhat limited in the modern context. Furthermore, modern applications
tend to call for $n>1$, further restricting the impact of the result
as a theoretical bulwark for finite mixture modeling in practice.
A remark in \cite{Bacharoglou:2010aa} states that the result can
generalized to the case where $g\in\mathcal{V}$ instead of $g=\phi$.
However, no suggestions were proposed, regarding the generalization
of Theorem \ref{thm Bacharoglou 2010} to the case of $n>1$.

In this article, we prove a novel set of results that largely generalize
Theorem \ref{thm Bacharoglou 2010}. Using techniques inspired by \cite{Donahue:1997aa}
and \cite{CheneyLight2000}, we are able to obtain a set of results
regarding the approximation capability of the class of $m\text{-component}$
mixture models $\mathcal{M}_{m}^{g}$, when $g\in\mathcal{C}_{0}$
or $g\in\mathcal{V}$, and for any $n\in\mathbb{N}$. By definition
of $\mathcal{V}$, the majority of our results extend beyond the proposed possible generalizations of Theorem \ref{thm Bacharoglou 2010}.

The article proceeds as follows. Our main theorem is stated and its seperate parts are proved in the Section \ref{mainResult}. Comments and discussion
are provided in Section \ref{Comments}. Necessary technical lemmas and results
are also included, for reference, in the Appendix.

\section{Main result} \label{mainResult}
The remainder of the article is devoted to proving the following theorem.
\begin{thm}
	[Main result] \label{thm main res}If we assume that $f$ and $g$
	are pdfs and that $g\in\mathcal{C}_{0}$, then the following statements
	are true.
	\begin{lyxlist}{00.00.0000}
		\item [{(a)}] For any $f\in\mathcal{C}_{0}$, there exists a sequence $\left\{ h_{m}^{g}\right\} $ ($h_{m}^{g}\in\mathcal{M}_{m}^{g}$),
		such that
		\[
		\lim_{m\rightarrow\infty}\left\Vert f-h_{m}^{g}\right\Vert _{\mathcal{L}_{\infty}}=0.
		\]
		\item [{(b)}] For any $f\in\mathcal{C}_{b}$ and compact $\mathbb{K}\subset\mathbb{R}^{n}$,
		there exists a sequence $\left\{ h_{m}^{g}\right\} $ ($h_{m}^{g}\in\mathcal{M}_{m}^{g}$), such that
		\[
		\lim_{m\rightarrow\infty}\left\Vert f-h_{m}^{g}\right\Vert _{\mathcal{L}_{\infty}\left(\mathbb{K}\right)}=0.
		\]
		\item [{(c)}] For any $1<p<\infty$ and $f\in\mathcal{L}_{p}$, there exists
		a sequence $\left\{ h_{m}^{g}\right\} $ ($h_{m}^{g}\in\mathcal{M}_{m}^{g}$), such that
		\[
		\lim_{m\rightarrow\infty}\left\Vert f-h_{m}^{g}\right\Vert _{\mathcal{L}_{p}}=0.
		\]
		\item [{(d)}] For any measurable $f$, there exists a sequence $\left\{ h_{m}^{g}\right\} $ ($h_{m}^{g}\in\mathcal{M}_{m}^{g}$),
		such that
		\[
		\lim_{m\rightarrow\infty}h_{m}^{g}=f\text{, almost everywhere.}
		\]
		\item [{(e)}] If $\nu$ is a $\sigma\text{-finite}$ Borel measure on $\mathbb{R}^{n}$,
		then for any $\nu\text{-measurable}$ $f$, there exists a sequence
		$\left\{ h_{m}^{g}\right\} $ ($h_{m}^{g}\in\mathcal{M}_{m}^{g}$), such that
		\[
		\lim_{m\rightarrow\infty}h_{m}^{g}=f,\] almost everywhere, with respect to $\nu$.
	\end{lyxlist}
	If we assume instead that $g\in\mathcal{V}$, then the following statement
	is also true.
	\begin{lyxlist}{00.00.0000}
		\item [{(f)}] For any $f\in\mathcal{C}$, there exists a sequence $\left\{ h_{m}^{g}\right\} $ ($h_{m}^{g}\in\mathcal{M}_{m}^{g}$),
		such that
		\[
		\lim_{m\rightarrow\infty}\left\Vert f-h_{m}^{g}\right\Vert _{\mathcal{L}_{1}}=0.
		\]
	\end{lyxlist}
\end{thm}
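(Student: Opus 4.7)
The plan is to approximate $f$ in $\mathcal{L}_{1}$ by a Riemann-sum discretisation of the convolution $f\ast g_{\sigma}$, where $g_{\sigma}(x)=\sigma^{-n}g(x/\sigma)$, and then to renormalise the result into an element of $\mathcal{M}_{m}^{g}$. The central observation is that, because $f\ge 0$, the Riemann-sum coefficients $f(y_{j})\lambda(C_{j})$ are automatically non-negative, so they already form a convex-cone combination of translates and dilates of $g$; only their total mass must be adjusted to $1$.

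Fix $\epsilon>0$. Because $f$ is a pdf, $f\in\mathcal{L}_{1}$ and there is a compact cube $\mathbb{K}\subset\mathbb{R}^{n}$ with $\int_{\mathbb{K}^{\complement}}f\,\text{d}\lambda<\epsilon$. Since $g\in\mathcal{V}\subset\mathcal{L}_{1}$ is a pdf, the rescaled family $\{g_{\sigma}\}_{\sigma>0}$ is a standard approximate identity in $\mathcal{L}_{1}$, so one may choose $\sigma>0$ small enough that $\|f-f\ast g_{\sigma}\|_{\mathcal{L}_{1}}<\epsilon$; Young's inequality then gives $\|f\ast g_{\sigma}-(f\mathbf{1}_{\mathbb{K}})\ast g_{\sigma}\|_{\mathcal{L}_{1}}<\epsilon$ as well. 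Next, partition $\mathbb{K}$ into $N$ cubes $\{C_{j}\}_{j=1}^{N}$ of common side $\delta$ with centres $y_{j}$, and define
\[
\tilde h(x)=\sum_{j=1}^{N}f(y_{j})\,\lambda(C_{j})\,g_{\sigma}(x-y_{j}).
\]

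The key technical step is to show $\|(f\mathbf{1}_{\mathbb{K}})\ast g_{\sigma}-\tilde h\|_{\mathcal{L}_{1}}\to 0$ as $\delta\to 0$. Writing this norm via Fubini as $\sum_{j}\int_{C_{j}}\int_{\mathbb{R}^{n}}|f(y)g_{\sigma}(x-y)-f(y_{j})g_{\sigma}(x-y_{j})|\,\text{d}x\,\text{d}y$ and splitting the integrand as $[f(y)-f(y_{j})]g_{\sigma}(x-y)+f(y_{j})[g_{\sigma}(x-y)-g_{\sigma}(x-y_{j})]$, the inner $x$-integral is bounded by $|f(y)-f(y_{j})|+f(y_{j})\,\|g_{\sigma}(\cdot-(y-y_{j}))-g_{\sigma}\|_{\mathcal{L}_{1}}$, using $\|g_{\sigma}\|_{\mathcal{L}_{1}}=1$. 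The first piece is $\le\omega_{f}(\delta)$, the modulus of continuity of $f|_{\mathbb{K}}$, which vanishes at $\delta=0$ by uniform continuity on the compact set $\mathbb{K}$; the second is $\le f(y_{j})\,\omega_{g_{\sigma}}(\delta)$ with $\omega_{g_{\sigma}}(\delta):=\sup_{|h|\le\delta}\|g_{\sigma}(\cdot-h)-g_{\sigma}\|_{\mathcal{L}_{1}}\to 0$ by continuity of translation in $\mathcal{L}_{1}$. Summing against the cubes yields a bound of the form $\lambda(\mathbb{K})\omega_{f}(\delta)+S_{\delta}\,\omega_{g_{\sigma}}(\delta)$, where $S_{\delta}=\sum_{j}f(y_{j})\lambda(C_{j})$ is a Riemann sum for $\int_{\mathbb{K}}f\le 1$ and is therefore bounded. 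Picking $\delta$ small enough puts this contribution below $\epsilon$. Finally, since $S_{\delta}=\|\tilde h\|_{\mathcal{L}_{1}}\to\int_{\mathbb{K}}f\in(1-\epsilon,1]$, setting $h_{N}^{g}=\tilde h/S_{\delta}$ produces an honest $h_{N}^{g}\in\mathcal{M}_{N}^{g}$ with $\|\tilde h-h_{N}^{g}\|_{\mathcal{L}_{1}}=|1-S_{\delta}|<2\epsilon$, and four applications of the triangle inequality close the argument.

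The main obstacle is that the Riemann-sum estimate above is an integral over all of $\mathbb{R}^{n}$ (not merely over $\mathbb{K}$), and its tail must be controlled uniformly in the grid parameter $\delta$ despite the unbounded support of $g_{\sigma}$. This is where the $\mathcal{V}$-decay $|g(x)|\le\beta(1+\|x\|_{2})^{-n-\theta}$ becomes convenient: it supplies a global integrable majorant for $|g_{\sigma}(x-y)-g_{\sigma}(x-y_{j})|$ valid uniformly in $|y-y_{j}|\le\delta$, so that $\omega_{g_{\sigma}}(\delta)$ is amenable to a direct dominated-convergence estimate and the global $\mathcal{L}_{1}$ tail of $\tilde h$ stays controlled as $\delta\to 0$. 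Everywhere else the argument reduces to standard approximation-of-identity and Young-inequality manipulations, together with the renormalisation step above.
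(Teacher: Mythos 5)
Your proposal addresses only part (f) of the theorem. The statement you are asked to prove has six parts: uniform ($\mathcal{L}_{\infty}$) approximation of $f\in\mathcal{C}_{0}$ in (a), uniform approximation on compact sets of $f\in\mathcal{C}_{b}$ in (b), $\mathcal{L}_{p}$ approximation of $f\in\mathcal{L}_{p}$ for $1<p<\infty$ in (c), and almost-everywhere convergence for measurable $f$ in (d)--(e), all under the hypothesis $g\in\mathcal{C}_{0}$; only (f) concerns $\mathcal{L}_{1}$ approximation under $g\in\mathcal{V}$. Your argument is an $\mathcal{L}_{1}$ argument throughout and cannot be transported to the other modes: for (a) and (b) one needs a \emph{uniform} Riemann-sum estimate for $g_{k}\star h$, which the paper obtains from the uniform continuity of $g$ (a consequence of $g\in\mathcal{C}_{0}$, Lemma \ref{lem C0 in Cb}) via the modulus $w(g,\delta)$, not from continuity of translation in $\mathcal{L}_{1}$; for (c) the approximand $f\in\mathcal{L}_{p}$ is not assumed continuous, so a pointwise Riemann sum in $f$ is unavailable, and the paper instead invokes the Maurey--Jones--Barron-type convex-hull approximation rate in Banach spaces (Lemma \ref{lem Donahue}) applied to $g_{k}\star f\in\overline{\text{Conv}}(\mathcal{G}_{g}^{k})$; and (d)--(e) are deduced from (a) together with measure-theoretic implications of uniform convergence (Lemma \ref{lem Bartle}). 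None of these ideas appears in your write-up, so five of the six claims are left unproven.

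For part (f) itself, your route is essentially sound and genuinely different from the paper's: you discretise $(f\mathbf{1}_{\mathbb{K}})\star g_{\sigma}$ directly in $\mathcal{L}_{1}$, controlling the error by the modulus of continuity of $f$ on $\mathbb{K}$ and by the $\mathcal{L}_{1}$-continuity of translation of $g_{\sigma}$, then renormalise the nonnegative coefficients; the paper instead performs the Riemann sum in $\mathcal{L}_{\infty}$ on an enlarged set $\mathbb{K}_{k}$ and uses the polynomial decay $|g(x)|\le\beta(1+\Vert x\Vert_{2})^{-n-\theta}$ with a polar-coordinate computation to control the $\mathcal{L}_{1}$ tails of both $g_{k}\star(\mathbf{1}_{\mathbb{K}}f)$ and $h_{m-1}^{g}$ outside $\mathbb{K}_{k}$. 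Your version is arguably cleaner and, as you half-observe, does not actually need $g\in\mathcal{V}$ (continuity of translation in $\mathcal{L}_{1}$ holds for any $g\in\mathcal{L}_{1}$, no integrable majorant required), which would strengthen (f); but two points deserve explicit care: the renormalisation constant $S_{\delta}$ may exceed $1$ (a Riemann sum of a nonnegative function need not be dominated by its integral), so the bound $\Vert\tilde h-h_{N}^{g}\Vert_{\mathcal{L}_{1}}=|1-S_{\delta}|$ should be argued via $|S_{\delta}-\int_{\mathbb{K}}f|+|\int_{\mathbb{K}}f-1|$, and the degenerate case $S_{\delta}=0$ should be excluded. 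Even granting all of this, the submission proves one sixth of the theorem.
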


\subsection{Technical preliminaries}

Before we begin to prove the main theorem, we establish some technical results regarding our
class of component densities $\mathcal{C}_{0}$. Let $f,g\in\mathcal{L}_{1}$
and denote the convolution of $f$ and $g$ by $f\star g=g\star f$. Further, we denote the sequence of dilates of $g$ by  $\left\{ g_{k}:g_{k}\left(x\right)=k^{n}g\left(kx\right), k \in \mathbb{N}\right\}.$
The following result is an alternative to Lemma \ref{lem makarov 9.3.3}
and Corollary \ref{cor cheney ch20 th4}. Here, we replace a boundedness
assumption on the approximand, in the aforementioned theorem by a
vanishing at infinity assumption, instead.
\begin{lem}
	\label{lem approx ident}Let $g$ be a pdf and $f\in\mathcal{C}_{0}$,
	such that $\left\Vert f\right\Vert _{\mathcal{L}_{\infty}}>0$. Then,
	\[
	\lim_{k\rightarrow\infty}\left\Vert g_{k}\star f-f\right\Vert _{\mathcal{L}_{\infty}}=0\text{.}
	\]
\end{lem}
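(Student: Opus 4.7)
The plan is to show that the sequence $\{g_k\}$ acts as an approximate identity, so that convolution against $g_k$ recovers $f$ uniformly in the limit. This is the classical strategy; the only feature of $\mathcal{C}_0$ (as opposed to merely $\mathcal{C}_b$) that we will need is uniform continuity on all of $\mathbb{R}^n$.

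First, I would establish two preliminary facts. (i) Since $f \in \mathcal{C}_0$, $f$ is uniformly continuous on $\mathbb{R}^n$: given $\epsilon>0$, pick a compact set $\mathbb{K}$ such that $|f(x)|<\epsilon/8$ on $\mathbb{K}^\complement$, enlarge $\mathbb{K}$ to a closed ball of sufficiently large radius, use the standard Heine--Cantor theorem on a slightly larger compact set to get uniform continuity there, and glue to conclude uniform continuity globally. Hence there exists $\delta>0$ such that $\|y\|_2<\delta$ implies $|f(x-y)-f(x)|<\epsilon/2$ for all $x$. (ii) Since $g$ is a pdf, $\int g_k\,\mathrm{d}\lambda=1$ for each $k$, and for any fixed $\delta>0$ the change of variables $u=ky$ gives
\[
\int_{\{\|y\|_2\ge\delta\}} g_k(y)\,\mathrm{d}\lambda(y) \;=\; \int_{\{\|u\|_2\ge k\delta\}} g(u)\,\mathrm{d}\lambda(u) \;\longrightarrow\; 0
\]
as $k\to\infty$, by the dominated convergence theorem applied to the integrable majorant $g$.

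Next, I would use the standard decomposition. Because $\int g_k\,\mathrm{d}\lambda=1$, for every $x\in\mathbb{R}^n$,
\[
(g_k\star f)(x) - f(x) \;=\; \int g_k(y)\bigl[f(x-y)-f(x)\bigr]\,\mathrm{d}\lambda(y).
\]
Splitting the domain according to whether $\|y\|_2<\delta$ or $\|y\|_2\ge\delta$ and applying the triangle inequality yields
\[
|(g_k\star f)(x)-f(x)| \;\le\; \frac{\epsilon}{2}\int_{\{\|y\|_2<\delta\}}\! g_k(y)\,\mathrm{d}\lambda(y) + 2\|f\|_{\mathcal{L}_\infty}\!\!\int_{\{\|y\|_2\ge\delta\}}\! g_k(y)\,\mathrm{d}\lambda(y).
\]
The first term is bounded by $\epsilon/2$ uniformly in $x$, and by fact (ii) the second term can be made smaller than $\epsilon/2$ uniformly in $x$ by taking $k$ sufficiently large (here we use that $\|f\|_{\mathcal{L}_\infty}<\infty$ since $f\in\mathcal{C}_0\subset\mathcal{C}_b$; the hypothesis $\|f\|_{\mathcal{L}_\infty}>0$ only serves to make the statement nontrivial). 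Taking the supremum over $x$ gives $\|g_k\star f - f\|_{\mathcal{L}_\infty}\le\epsilon$ for all large $k$, which is the desired conclusion.

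The main conceptual step, and the only one specific to $\mathcal{C}_0$ rather than just $\mathcal{C}_b$, is the uniform continuity in (i): this is what distinguishes this lemma from the cited Lemma and Corollary in the Appendix, where boundedness is used but only local (rather than global) uniform continuity is available. Once uniform continuity is in hand, the remainder is a routine approximate-identity computation that does not require any regularity or moment assumptions on $g$ beyond being a pdf.
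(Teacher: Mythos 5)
Your proof is correct and follows essentially the same route as the paper's: the identical decomposition $(g_k\star f)(x)-f(x)=\int g_k(y)[f(x-y)-f(x)]\,\mathrm{d}\lambda(y)$, the same split of the integral at $\|y\|_2=\delta$ using uniform continuity of $f$ (which the paper obtains from its Lemma on $\mathcal{C}_0\subset\mathcal{C}_b$) and the vanishing tail mass of the dilates (which the paper obtains from its Corollary on approximate identities). The only cosmetic difference is that you prove these two preliminary facts directly rather than citing the appendix lemmas, and you correctly observe that the hypothesis $\left\Vert f\right\Vert_{\mathcal{L}_{\infty}}>0$ is not actually needed in your arrangement of the estimate.
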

\begin{proof}
	It suffices to show that for any $\epsilon>0$, there exists a $k\left(\epsilon\right)\in\mathbb{N}$,
	such that $\left\Vert g_{k}\star f-f\right\Vert _{\mathcal{L}_{\infty}}<\epsilon$,
	for all $k\ge k\left(\epsilon\right)$. By Lemma \ref{lem C0 in Cb},
	$f\in\mathcal{C}_{b}$, and thus $\left\Vert f\right\Vert _{\mathcal{L}_{\infty}}<\infty$.
	By making the substitution $z=kx$, we obtain for each $k$
	\[
	\int g_{k}\left(x\right)\text{d}\lambda=\int k^{n}g\left(kx\right)\text{d}\lambda=\int g\left(z\right)\text{d}\lambda=1\text{.}
	\]
	By Corollary \ref{cor cheney ch20 th4}, we obtain
	$\lim_{k\rightarrow\infty}\int\mathbf{1}_{\left\{ x:\left\Vert x\right\Vert _{2}>\delta\right\} }g_{k}\text{d}\lambda=0$
	and thus we can choose a $k\left(\epsilon\right)$, such that
	\[
	\int\mathbf{1}_{\left\{ x:\left\Vert x\right\Vert _{2}>\delta\right\} }g_{k}\text{d}\lambda<\frac{\epsilon}{4\left\Vert f\right\Vert _{\mathcal{L}_{\infty}}}\text{.}
	\]
	Since $g$ is a pdf, we have
	\begin{align*}
	\left|\left(g_{k}\star f\right)(x)-f(x)\right| & =\left|\int g_{k}\left(y\right)\left[f\left(x-y\right)-f\left(x\right)\right]\text{d}\lambda\left(y\right)\right|\\
	& \le\int g_{k}\left(y\right)\left|f\left(x-y\right)-f\left(x\right)\right|\text{d}\lambda\left(y\right)\text{.}
	\end{align*}
	By uniform continuity, for any $\epsilon>0$, there exists a $\delta\left(\epsilon\right)>0$
	such that $\left|f\left(x-y\right)-f\left(x\right)\right|<\epsilon/2$,
	for any $x,y\in\mathbb{R}^{n}$, such that $\left\Vert y\right\Vert _{2}<\delta\left(\epsilon\right)$
	(Lemma \ref{lem C0 in Cb}). Thus, on the one hand, for any $\delta\left(\epsilon\right)$,
	we can pick a $k\left(\epsilon\right)$ such that
	\begin{align}
	&\nonumber\int\mathbf{1}_{\left\{ y:\left\Vert y\right\Vert _{2}>\delta\left(\epsilon\right)\right\} }g_{k}\left(y\right)\left|f\left(x-y\right)-f\left(x\right)\right|\text{d}\lambda\left(y\right)\\\nonumber & \le2\left\Vert f\right\Vert _{\mathcal{L}_{\infty}}\int\mathbf{1}_{\left\{ y:\left\Vert y\right\Vert _{2}>\delta\left(\epsilon\right)\right\} }g_{k}\text{d}\lambda \nonumber\\
	& \le2\left\Vert f\right\Vert _{\mathcal{L}_{\infty}}\times\frac{\epsilon}{4\left\Vert f\right\Vert _{\mathcal{L}_{\infty}}}=\frac{\epsilon}{2}\text{,}\label{eq: C0 approx identity 2} 
	\end{align}
	and on the other hand
	\begin{align}
	&\nonumber\int\mathbf{1}_{\left\{ y:\left\Vert y\right\Vert _{2}\le\delta\left(\epsilon\right)\right\} }g_{k}\left(y\right)\left|f\left(x-y\right)-f\left(x\right)\right|\text{d}\lambda\left(y\right)\\ & \le\frac{\epsilon}{2}\int\mathbf{1}_{\left\{ y:\left\Vert y\right\Vert _{2}\le\delta\left(\epsilon\right)\right\} }g_{k}\text{d}\lambda\nonumber\\
	& \le\frac{\epsilon}{2}\times1=\frac{\epsilon}{2}\text{.}\label{eq: C0 approx identity 3}
	\end{align}
	The proof is completed by summing (\ref{eq: C0 approx identity 2})
	and (\ref{eq: C0 approx identity 3}).
\end{proof}
\begin{lem}
	\label{lem urysohn implication}If $f\in\mathcal{C}_{0}$ is such
	that $f\ge0$, and $\epsilon>0$, then there exists a $h\in\mathcal{C}_{c}$,
	such that $0\le h\le f$, and
	\[
	\left\Vert f-h\right\Vert _{\mathcal{L}_{\infty}}<\epsilon
	\]
\end{lem}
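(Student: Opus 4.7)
The plan is to take $h = \psi f$, where $\psi : \mathbb{R}^{n} \to [0,1]$ is a continuous cut-off function of compact support that equals $1$ on a sufficiently large compact set. Because $0 \le \psi \le 1$ and $f \ge 0$, this immediately yields $0 \le h \le f$, and because $\supp \psi$ is compact, $h \in \mathcal{C}_{c}$ as a product of continuous functions that vanishes off that compact set. Everything then reduces to choosing $\psi$ so that $\left\Vert f - h \right\Vert_{\mathcal{L}_{\infty}} < \epsilon$.

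To choose $\psi$, first I would use the definition of $\mathcal{C}_{0}$ to select a compact $\mathbb{K} \subset \mathbb{R}^{n}$ satisfying $\left\Vert f \right\Vert_{\mathcal{L}_{\infty}(\mathbb{K}^{\complement})} < \epsilon$. Then I would define $\psi$ explicitly, for instance by
\[
\psi(x) = \max\bigl\{0,\, 1 - \mathrm{dist}(x, \mathbb{K})\bigr\},
\]
where $\mathrm{dist}(x, \mathbb{K}) = \inf_{y \in \mathbb{K}} \left\Vert x - y \right\Vert_{2}$. This function is continuous (the distance to a set is $1$-Lipschitz), takes values in $[0,1]$, equals $1$ on $\mathbb{K}$, and is supported in the closed $1$-neighborhood of $\mathbb{K}$, which is closed and bounded in $\mathbb{R}^{n}$ and therefore compact. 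The name of the lemma suggests the alternative, more abstract route of invoking Urysohn's lemma (valid because $\mathbb{R}^{n}$ is normal) for the disjoint closed sets $\mathbb{K}$ and the complement of any bounded open neighborhood of $\mathbb{K}$; either approach produces the desired $\psi$.

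Finally I would verify the uniform bound. Writing $f - h = (1-\psi) f$, the difference vanishes identically on $\mathbb{K}$, while for $x \in \mathbb{K}^{\complement}$, using $0 \le 1 - \psi(x) \le 1$ and $f \ge 0$,
\[
\left| f(x) - h(x) \right| = \bigl(1 - \psi(x)\bigr) f(x) \le f(x) < \epsilon,
\]
so taking the supremum over $\mathbb{R}^{n}$ gives $\left\Vert f - h \right\Vert_{\mathcal{L}_{\infty}} < \epsilon$. There is essentially no obstacle here: the only mildly delicate point is simultaneously securing continuity, compact support, and the domination $h \le f$, and the distance-function construction handles all three in one stroke.
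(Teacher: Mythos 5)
Your proposal is correct and is essentially the paper's own argument: both select a compact $\mathbb{K}$ from the definition of $\mathcal{C}_{0}$, set $h=\psi f$ for a continuous cut-off $\psi$ with $0\le\psi\le1$, $\mathbf{1}_{\mathbb{K}}\psi=1$ and compact support (the paper cites Urysohn's lemma where you build $\psi$ explicitly from the distance function), and then bound $f-h$ on $\mathbb{K}^{\complement}$. Your pointwise estimate $|f-h|=(1-\psi)f\le f$ is in fact slightly sharper than the paper's triangle-inequality step, which is why the paper needs $\left\Vert f\right\Vert _{\mathcal{L}_{\infty}\left(\mathbb{K}^{\complement}\right)}<\epsilon/2$ while $<\epsilon$ suffices for you.
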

\begin{proof}
	Since $f\in\mathcal{C}_{0}$, there exists a compact $\mathbb{K}\subset\mathbb{R}^{n}$
	such that $\left\Vert f\right\Vert _{\mathcal{L}_{\infty}\left(\mathbb{K}^{\complement}\right)}<\epsilon/2$.
	By Lemma \ref{lem urysohn}, there exists some $g\in\mathcal{C}_{c}$,
	such that $0\le g\le1$ and $\mathbf{1}_{\mathbb{K}}g=1$. Let $h=gf$,
	which implies that $h\ge0$ and $0\le h\le f$. Furthermore, notice
	that $\mathbf{1}_{\mathbb{K}}\left(f-h\right)=0$ and $\left\Vert h\right\Vert _{\mathcal{L}_{\infty}}\le\left\Vert f\right\Vert _{\mathcal{L}_{\infty}}$,
	by construction. The proof is completed by observing that
	\begin{align*}
	\left\Vert f-h\right\Vert _{\mathcal{L}_{\infty}}&=\left\Vert f-h\right\Vert _{\mathcal{L}_{\infty}\left(\mathbb{K}^{\complement}\right)}\\ &\le\left\Vert f\right\Vert _{\mathcal{L}_{\infty}\left(\mathbb{K}^{\complement}\right)}+\left\Vert h\right\Vert _{\mathcal{L}_{\infty}\left(\mathbb{K}^{\complement}\right)} \\&\le2\left\Vert f\right\Vert _{\mathcal{L}_{\infty}\left(\mathbb{K}^{\complement}\right)}<\epsilon\text{.}
	\end{align*}
\end{proof}
For any $\delta>0$, uniformly continuous function $f$, let
\[
w\left(f,\delta\right)=\sup_{\left\{ x,y\in\mathbb{R}^{n}:\left\Vert x-y\right\Vert_2 \le\delta\right\} }\left|f\left(x\right)-f\left(y\right)\right|
\]
denote the modulus of continuity of $f$. Furthermore, define the
diameter of a set $\mathbb{X}\subset\mathbb{R}^{n}$ by $\text{diam}\left(\mathbb{X}\right)=\sup_{x,y\in\mathbb{X}}\left\Vert x-y\right\Vert _{2}$
and denote an open ball, centered at $x\in\mathbb{R}^{n}$ with radius
$r>0$ by $\mathbb{B}\left(x,r\right)=\left\{ y\in\mathbb{R}^{n}:\left\Vert x-y\right\Vert _{2}<r\right\} $.

Notice that the class $\mathcal{M}_{m}^{g}$ can be parameterized
as
\begin{align*}
\mathcal{M}_{m}^{g}&=\Bigg\{ h:h\left(x\right)=\sum_{i=1}^{m}c_{i}k_{i}^{n}g\left(k_{i}x-z_{i}\right), \\&\text{\hspace{2cm}}z_{i}\in\mathbb{R}^{n}\text{, }k_{i}\in\mathbb{R}_{+}\text{, }c\in\mathbb{S}^{m-1}\text{, }i\in\left[m\right]\Bigg\} \text{,}
\end{align*}
where $k_{i}=1/\sigma_{i}$ and $z_{i}=\mu_{i}/\sigma_{i}$. The following
result is the primary mechanism that permits us to construct finite
mixture approximations for convolutions of form $g_{k}\star f$. The
argument motivated by the approaches taken in Theorem 1 in \cite[Ch. 24]{CheneyLight2000},
\cite[Lem. 3.1]{Nestoridis:2007aa}, and \cite[Thm. 3.1]{Nestoridis:2011aa}.
\begin{lem}
	\label{lem core lemma}Let $f\in\mathcal{C}$ and $g\in\mathcal{C}_{0}$
	be pdfs. Furthermore, let $\mathbb{K}\subset\mathbb{R}^{n}$ be compact
	and $h\in\mathcal{C}_{c}$, where $\mathbf{1}_{\mathbb{K}^{\complement}}h=0$
	and $0\le h\le f$. Then for any $k\in\mathbb{N}$, there exists a
	sequence $\left\{ h_{m}^{g}\right\} $, such that
	\[
	\lim_{m\rightarrow\infty}\left\Vert g_{k}\star h-h_{m}^{g}\right\Vert _{\mathcal{L}_{\infty}}=0.
	\]
\end{lem}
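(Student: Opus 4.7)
The plan is to mimic the Riemann-sum strategy employed in \cite[Ch.~24]{CheneyLight2000} and \cite{Nestoridis:2007aa}, then enforce the probability-simplex constraint that distinguishes $\mathcal{M}_m^g$ from the larger class $\mathcal{N}_m^g$ by adjoining a single dispersed corrective component. Since $h$ is supported on $\mathbb{K}$, the convolution reduces to an integral over a compact set,
\[
(g_k \star h)(x) = \int_{\mathbb{K}} h(y)\, k^n g\bigl(k(x-y)\bigr)\, \mathrm{d}\lambda(y).
\]
For each $m$, I would partition $\mathbb{K}$ into Borel cells $\mathbb{K}_1^{(m)},\dots,\mathbb{K}_{N_m}^{(m)}$ of diameter at most $\delta_m$, with $\delta_m \to 0$, pick representatives $y_i^{(m)} \in \mathbb{K}_i^{(m)}$, and form the Riemann sum
\[
\tilde{S}_m(x) = \sum_{i=1}^{N_m} h\bigl(y_i^{(m)}\bigr)\, \lambda\bigl(\mathbb{K}_i^{(m)}\bigr)\, k^n g\bigl(k(x-y_i^{(m)})\bigr).
\]

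To establish $\|\tilde{S}_m - g_k \star h\|_{\mathcal{L}_\infty} \to 0$, I would estimate cellwise via
\[
\bigl|h(y) g_k(x-y) - h(y_i^{(m)}) g_k(x-y_i^{(m)})\bigr| \le \|g_k\|_{\mathcal{L}_\infty}\, w(h, \delta_m) + \|h\|_{\mathcal{L}_\infty}\, w(g_k, \delta_m),
\]
for $y \in \mathbb{K}_i^{(m)}$, where $w(g_k, \delta_m) = k^n w(g, k\delta_m)$. Since $h \in \mathcal{C}_c$ and $g \in \mathcal{C}_0 \subset \mathcal{C}_b$ are uniformly continuous (Lemma \ref{lem C0 in Cb}), integrating over $\mathbb{K}$ and summing yields
\[
\|g_k \star h - \tilde{S}_m\|_{\mathcal{L}_\infty} \le \lambda(\mathbb{K})\, k^n \bigl[\|g\|_{\mathcal{L}_\infty} w(h,\delta_m) + \|h\|_{\mathcal{L}_\infty} w(g, k\delta_m)\bigr] \longrightarrow 0.
\]

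The principal obstacle is that $\tilde{S}_m$ need not lie in $\mathcal{M}_{N_m}^g$: the coefficients $c_i^{(m)} = h(y_i^{(m)})\lambda(\mathbb{K}_i^{(m)})$ are non-negative, yet their sum $\alpha_m$ only converges to $\int h\, \mathrm{d}\lambda \le 1$ and may exceed $1$ for finite $m$. I would resolve this by setting $\gamma_m = \max\{1,\alpha_m\}$, rescaling each coefficient to $c_i^{(m)}/\gamma_m$, and adjoining one extra component with weight $c_0^{(m)} = 1 - \alpha_m/\gamma_m \in [0,1]$ and scale $\sigma_0^{(m)} = R_m \to \infty$ (located, say, at the origin). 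The resulting approximant $h_m^g$ then lies in $\mathcal{M}_{N_m+1}^g$, and the overall error decomposes as
\[
\|g_k \star h - h_m^g\|_{\mathcal{L}_\infty} \le \|g_k \star h - \tilde{S}_m\|_{\mathcal{L}_\infty} + (1 - 1/\gamma_m)\|\tilde{S}_m\|_{\mathcal{L}_\infty} + c_0^{(m)}\|g\|_{\mathcal{L}_\infty}/R_m^n.
\]
Using $\|\tilde{S}_m\|_{\mathcal{L}_\infty} \le \alpha_m k^n \|g\|_{\mathcal{L}_\infty}$, the middle term is bounded by $(\alpha_m - 1)_+ k^n \|g\|_{\mathcal{L}_\infty}$, which vanishes because $\alpha_m \to \int h \le 1$; the third term vanishes as $R_m \to \infty$, since $g \in \mathcal{C}_0 \subset \mathcal{C}_b$ has finite sup-norm. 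This yields the desired conclusion.

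The hard part will be handling the simplex constraint cleanly. Unlike the corresponding results for $\mathcal{N}_m^g$ in \cite{Nestoridis:2007aa,Nestoridis:2011aa}, where arbitrary real coefficients are permitted, restricting to convex combinations forces the renormalization and dispersed-corrector trick above. This is also precisely where the hypothesis $g \in \mathcal{C}_0$ (rather than merely $g \in \mathcal{L}_1$) is essential, as it guarantees both the boundedness needed to control the corrector and the uniform continuity needed in the Riemann-sum estimate.
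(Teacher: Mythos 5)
Your proposal is correct and follows essentially the same route as the paper: reduce the convolution to an integral over the compact support, approximate it by a Riemann sum whose non-negative coefficients nearly exhaust the unit mass, and absorb the leftover weight into one highly dispersed component whose sup-norm is negligible because $g\in\mathcal{C}_{0}\subset\mathcal{C}_{b}$. The only (harmless) divergence is that you take point-evaluation coefficients $h(y_{i}^{(m)})\lambda(\mathbb{K}_{i}^{(m)})$, whose sum $\alpha_{m}$ may exceed $1$ and hence forces the $\gamma_{m}=\max\{1,\alpha_{m}\}$ renormalization, whereas the paper takes the exact cell integrals $c_{i}=k^{-n}\int\mathbf{1}_{\mathbb{A}_{i}^{\delta}}h(z/k)\,\text{d}\lambda(z)$, which sum to $\int_{\mathbb{K}}h\,\text{d}\lambda\le1$ automatically and leave only the modulus of continuity of $g$ in the error term.
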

\begin{proof}
	It suffices to show that for any  $k\in\mathbb{N}$ and  $\epsilon>0$, there exists a sufficiently large enough $m(\epsilon) \in \mathbb{N}$ so that for all $m \geq m(\epsilon),h_{m}^{g}\in\mathcal{M}_{m}^{g}$
	such that
	\begin{equation}
	\left\Vert g_{k}\star h-h_{m}^{g}\right\Vert _{\mathcal{L}_{\infty}}<\epsilon.\label{eq: core lemma 1}
	\end{equation}
	
	For any $k\in\mathbb{N}$, we can write
	\begin{align*}
	\left(g_{k}\star h\right)(x) & =\int g_{k}\left(x-y\right)h\left(y\right)\text{d}\lambda\left(y\right)\\&=\int\mathbf{1}_{\left\{ y:y\in\mathbb{K}\right\} }g_k\left(x-y\right)h\left(y\right)\text{d}\lambda\left(y\right)\\
	& =\int\mathbf{1}_{\left\{ y:y\in\mathbb{K}\right\} }k^{n}g\left(kx-ky\right)h\left(y\right)\text{d}\lambda\left(y\right)\\&=\int\mathbf{1}_{\left\{ z:z\in k\mathbb{K}\right\} }g\left(kx-z\right)h\left(\frac{z}{k}\right)\text{d}\lambda\left(z\right)\text{.}
	\end{align*}
	Here, $k\mathbb{K}$ is continuous image of a compact set, and hence
	is compact (cf. \cite[Thm. 4.14]{Rudin:1976aa}). By Lemma \ref{lem covering},
	for any $\delta>0$, there exists $\kappa_{i}\in\mathbb{R}^{n}$ ($i\in\left[m-1\right]$,
	$m\in\mathbb{N}$), such that $k\mathbb{K}\subset\bigcup_{i=1}^{m-1}\mathbb{B}\left(\kappa_{i},\delta/2\right)$.
	Further, if $\mathbb{B}_{i}^{\delta}=k\mathbb{K}\cap\mathbb{B}\left(\kappa_{i},\delta/2\right)$,
	then we have $k\mathbb{K}=\bigcup_{i=1}^{m-1}\mathbb{B}_{i}^{\delta}$.
	We can obtain a disjoint covering of $k\mathbb{K}$ by taking $\mathbb{A}_{1}^{\delta}=\mathbb{B}_{1}$
	and $\mathbb{A}_{i}^{\delta}=\mathbb{B}_{i}^{\delta}\backslash\bigcup_{j=1}^{i-1}\mathbb{B}_{j}^{\delta}$
	($i\in\left[m-1\right]$) and noting that $k\mathbb{K}=\bigcup_{i=1}^{m-1}\mathbb{A}_{i}^{\delta}$,
	by construction (cf. \cite[Ch. 24]{CheneyLight2000}). Furthermore,
	each $\mathbb{A}_{i}^{\delta}$ is a Borel set and $\text{diam}\left(\mathbb{A}_{i}^{\delta}\right)\le\delta$.
	
	For convenience, let $\Pi_{m}^{\delta}=\left\{ \mathbb{A}_{i}^{\delta}:i\in\left[m-1\right]\right\} $
	denote the disjoint covering, or partition, of $k\mathbb{K}$. We
	seek to show that there exists an $m\in\mathbb{N}$ and $\Pi_{m}^{\delta}$,
	such that
	\[
	\left\Vert g_{k}\star h-\sum_{i=1}^{m}c_{i}k_{i}^{n}g\left(k_{i}x-z_{i}\right)\right\Vert _{\mathcal{L}_{\infty}}<\epsilon\text{,}
	\]
	where $k_{i}=k$,
	$$c_{i}=k^{-n}\int\mathbf{1}_{\left\{ z:z\in\mathbb{A}_{i}^{\delta}\right\} }h\left(z/k\right)\text{d}\lambda(z),$$
	and $z_i\in\mathbb{A}_{i}^{\delta}$, for $i\in\left[m-1\right]$.
	
	Further, $z_{m}\in\mathbb{A}_{m-1}^{\delta}$ and $c_{m}=1-\sum_{i=1}^{m-1}c_{i}$,
	with $k_{m}$ chosen as follows. By Lemma \ref{lem C0 in Cb}, $g\le C<\infty$
	for some positive $C$. Then, $\left\Vert c_{m}k_{m}^{n}g\left(k_{m}x-z_{m}\right)\right\Vert _{\mathcal{L}_{\infty}}\le c_{m}k_{m}^{n}C$.
	We may choose $k_{m}$ so that $k_{m}^{n}=\epsilon/\left(2c_{m}C\right)$,
	so that
	\[
	\left\Vert c_{m}k_{m}^{n}g\left(k_{m}x-z_{m}\right)\right\Vert _{\mathcal{L}_{\infty}}\le\frac{\epsilon}{2}\text{.}
	\]
	
	Since $0\le h\le f$, the sum of $c_{i}$ ($i\in\left[m-1\right]$)
	satisfies the inequality
	\begin{align*}
	\sum_{i=1}^{m-1}c_{i} & =k^{-n}\sum_{i=1}^{m-1}\int\mathbf{1}_{\left\{ z:z\in\mathbb{A}_{i}^{\delta}\right\} }h\left(\frac{z}{k}\right)\text{d}\lambda\\&=k^{-n}\int\mathbf{1}_{\left\{ z:z\in k\mathbb{K}\right\} }h\left(\frac{z}{k}\right)\text{d}\lambda\\
	& =\int\mathbf{1}_{\left\{ x:x\in\mathbb{K}\right\} }h\text{d}\lambda\le\int\mathbf{1}_{\left\{ x:x\in\mathbb{K}\right\} }f\text{d}\lambda\le\int f\text{d}\lambda=1\text{.}
	\end{align*}
	Thus, $0\le c_{m}\le1$, and our construction implies that $h_{m}^{g} \in \mathcal{M}_{m}^{g}\text{,}$ where
	\[
	h_{m}^{g}\left(x\right)=\sum_{i=1}^{m}c_{i}k_{i}^{n}g\left(k_{i}x-z_{i}\right) \forall x \in \mathbb{R}^n\text{.}
	\]
	
	We can bound the left-hand side of (\ref{eq: core lemma 1}) as follows:
	\small
	\begin{align}\label{eq: core lemma 2}
	& \left\Vert g_{k}\star h-h_{g}^{m}\right\Vert _{\mathcal{L}_{\infty}}\nonumber \\
	& \le\left\Vert \left(g_{k}\star h\right)\left(x\right)-\sum_{i=1}^{m-1}c_{i}k_{i}^{n}g\left(k_{i}x-z_{i}\right)\right\Vert _{\mathcal{L}_{\infty}} \nonumber\\& \text{\hspace{0.5cm}}+\left\Vert c_{m}k_{m}^{n}g\left(k_{m}x-z_{m}\right)\right\Vert _{\mathcal{L}_{\infty}}\nonumber \\
	& \le\left\Vert \left(g_{k}\star h\right)\left(x\right)-\sum_{i=1}^{m-1}c_{i}k_{i}^{n}g\left(k_{i}x-z_{i}\right)\right\Vert _{\mathcal{L}_{\infty}}+\frac{\epsilon}{2}\nonumber \\
	& =\Bigg\Vert \int\mathbf{1}_{\left\{ z:z\in k\mathbb{K}\right\} }g\left(kx-z\right)h\left(\frac{z}{k}\right)\text{d}\lambda\left(z\right)\nonumber\\& \text{\hspace{0.5cm}}-\sum_{i=1}^{m-1}\int\mathbf{1}_{\left\{ z:z\in\mathbb{A}_{i}^\delta\right\} }g\left(kx-z_{i}\right)h\left(\frac{z}{k}\right)\text{d}\lambda\left(z\right)\Bigg\Vert _{\mathcal{L}_{\infty}}+\frac{\epsilon}{2}\nonumber \\
	&\le \sum_{i=1}^{m-1} \int\mathbf{1}_{\left\{ z:z\in\mathbb{A}_{i}^\delta\right\} }\left\Vert g\left(kx-z\right)-g\left(kx-z_{i}\right) \right\Vert_{\mathcal{L}_{\infty}} h\left(\frac{z}{k}\right)\text{d}\lambda\left(z\right)  +\frac{\epsilon}{2}\text{.}
	\end{align}
	\normalsize
	Since
	\[
	\left\Vert kx-z-\left(kx-z_{i}\right)\right\Vert _{2}=\left\Vert z-z_{i}\right\Vert _{2}\le\text{diam}\left(\mathbb{A}_{i}^\delta\right)\le\delta\text{,}
	\]
	we have $\left|g\left(kx-z\right)-g\left(kx-z_{i}\right)\right|\le w\left(g,\delta\right)$,
	for each $i\in\left[m-1\right]$. Since $\lim_{\delta\rightarrow0}w\left(g,\delta\right)=0$
	(cf. \cite[Thm. 4.7.3]{Makarov:2013aa}), we may choose a $\delta\left(\epsilon\right)>0$
	so that $w\left(g,\delta\left(\epsilon\right)\right)<\epsilon/\left(2k^{n}\right)$.
	We may proceed from (\ref{eq: core lemma 2}) as follows:
	\begin{align}
	\left\Vert g_{k}\star h-h_{g}^{m}\right\Vert _{\mathcal{L}_{\infty}} & \le w\left(g,\delta\left(\epsilon\right)\right)\int\mathbf{1}_{\left\{ z:z\in k\mathbb{K}\right\} }h\left(\frac{z}{k}\right)\text{d}\lambda+\frac{\epsilon}{2}\nonumber \\
	& =w\left(g,\delta\left(\epsilon\right)\right)k^{n}\int h\text{d}\lambda+\frac{\epsilon}{2}\nonumber\\&\le w\left(g,\delta\left(\epsilon\right)\right)k^{n}+\frac{\epsilon}{2}\nonumber \\
	& <\frac{\epsilon}{2}+\frac{\epsilon}{2}=\epsilon\text{.}\label{eq: core lemma 3}
	\end{align}
	To conclude the proof, it suffices to choose an appropriate sequence of partitions
	$\Pi_{m}^{\delta\left(\epsilon\right)}, m \ge m(\epsilon)$, for some large but finite
	$m(\epsilon)$, so that (\ref{eq: core lemma 2}) and (\ref{eq: core lemma 3})
	hold, which is possible by Lemma \ref{lem covering}.
\end{proof}
For any $r\in\mathbb{N}$, let $\bar{\mathbb{B}}_{r}=\left\{ x\in\mathbb{R}^{n}:\left\Vert x\right\Vert _{2}\le r\right\} $
be a closed ball of radius $r$, centered at the origin.
\begin{lem}
	\label{lem Lesbegue dom con theorem}If $f\in\mathcal{L}_{1}$, such
	that $f\ge0$, then
	\[
	\lim_{r\rightarrow\infty}\left\Vert f-\mathbf{1}_{\bar{\mathbb{B}}_{r}}f\right\Vert _{\mathcal{L}_{1}}=0\text{.}
	\]
\end{lem}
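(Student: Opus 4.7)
The plan is to recognize the quantity $\|f - \mathbf{1}_{\bar{\mathbb{B}}_{r}} f\|_{\mathcal{L}_{1}}$ as the tail mass of $f$ lying outside the closed ball $\bar{\mathbb{B}}_{r}$, and then conclude the limit via the Lebesgue dominated convergence theorem (as suggested by the name of the lemma). An equally valid route goes through the monotone convergence theorem, exploiting that the truncations increase to $f$; either way, the argument is essentially a routine tail-vanishing estimate for integrable functions.

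First, since $\mathbf{1}_{\bar{\mathbb{B}}_{r}} + \mathbf{1}_{\bar{\mathbb{B}}_{r}^{\complement}} = 1$ pointwise, we have the identity $f - \mathbf{1}_{\bar{\mathbb{B}}_{r}} f = \mathbf{1}_{\bar{\mathbb{B}}_{r}^{\complement}} f$ on $\mathbb{R}^{n}$, and because $f \ge 0$ this integrand is non-negative, so
\[
\left\Vert f - \mathbf{1}_{\bar{\mathbb{B}}_{r}} f \right\Vert_{\mathcal{L}_{1}} = \int \mathbf{1}_{\bar{\mathbb{B}}_{r}^{\complement}} f \,\text{d}\lambda.
\]
Next, for every $x\in\mathbb{R}^{n}$ we have $\mathbf{1}_{\bar{\mathbb{B}}_{r}^{\complement}}(x) f(x) \to 0$ as $r\to\infty$ (indeed, once $r \ge \|x\|_{2}$, the integrand vanishes at $x$), and the functions $\mathbf{1}_{\bar{\mathbb{B}}_{r}^{\complement}} f$ are dominated pointwise by the fixed $\mathcal{L}_{1}$ function $f$. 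The Lebesgue dominated convergence theorem then yields
\[
\lim_{r\rightarrow\infty} \int \mathbf{1}_{\bar{\mathbb{B}}_{r}^{\complement}} f \,\text{d}\lambda = 0,
\]
which is exactly the claim. A small remark: the limit is taken as $r\to\infty$ through $\mathbb{N}$, but since the map $r \mapsto \int \mathbf{1}_{\bar{\mathbb{B}}_{r}^{\complement}} f \,\text{d}\lambda$ is monotone non-increasing in $r$, convergence along the integers is equivalent to convergence along all real $r\to\infty$.

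There is no genuine obstacle here, the only point requiring a line of justification is the pointwise decay $\mathbf{1}_{\bar{\mathbb{B}}_{r}^{\complement}}(x) \to 0$ for each fixed $x$, which is immediate. The lemma is a standard tail-truncation fact for $\mathcal{L}_{1}$ functions and will serve, together with Lemmas \ref{lem approx ident}, \ref{lem urysohn implication}, and \ref{lem core lemma}, as one of the ingredients to pass from compactly supported approximands to general $\mathcal{L}_{p}$ approximands in the proof of Theorem \ref{thm main res}.
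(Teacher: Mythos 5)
Your proof is correct and follows essentially the same route as the paper: both arguments rest on the pointwise convergence of the truncations together with the domination $0\le\mathbf{1}_{\bar{\mathbb{B}}_{r}}f\le f$ and an appeal to the Lebesgue dominated convergence theorem. Your phrasing via the tail $\mathbf{1}_{\bar{\mathbb{B}}_{r}^{\complement}}f$ and the remark on monotonicity in $r$ are harmless refinements of the same argument.
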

\begin{proof}
	By construction, each element of the sequence $\left\{ \mathbf{1}_{\bar{\mathbb{B}}_{r}}f\right\} $
	($r\in\mathbb{N}$) is measurable, $0\le\mathbf{1}_{\bar{\mathbb{B}}_{r}}f\le f$,
	and
	\[
	\lim_{r\rightarrow\infty}\mathbf{1}_{\bar{\mathbb{B}}_{r}}f=f\text{,}
	\]
	point-wise. We obtain our conclusion via the Lesbegue dominated convergence
	theorem.
\end{proof}

\subsection{Proof of Theorem \ref{thm main res}(a)}

We now proceed to prove each of the parts of Theorem~\ref{thm main res}. To prove Theorem~\ref{thm main res}(a) it suffices to show that for every $\epsilon>0$, there exists a $h_{m}^{g}\in\mathcal{M}_{m}^{g}$,
such that $\left\Vert f-h_{m}^{g}\right\Vert _{\mathcal{L}_{\infty}}<\epsilon\text{.}$

Start by applying Lemma \ref{lem urysohn implication} to obtain $h\in\mathcal{C}_{c}$,
such that $0\le h\le f$ and $\left\Vert f-h\right\Vert _{\mathcal{L}_{\infty}}<\epsilon/2$.
Then, we have
\begin{align}
\left\Vert f-h_{m}^{g}\right\Vert _{\mathcal{L}_{\infty}} & \le\left\Vert f-h\right\Vert _{\mathcal{L}_{\infty}}+\left\Vert h-h_{m}^{g}\right\Vert _{\mathcal{L}_{\infty}}\nonumber \\
& <\frac{\epsilon}{2}+\left\Vert h-h_{m}^{g}\right\Vert _{\mathcal{L}_{\infty}}\text{.}\label{eq: proof (a) 1}
\end{align}

The goal is to find a $h_{m}^{g}$, such that $\left\Vert h-h_{m}^{g}\right\Vert _{\mathcal{L}_{\infty}}<\epsilon/2$.
Since $h\in\mathcal{C}_{c}$, we may find a compact $\mathbb{K}\subset\mathbb{R}^{n}$
such that $\left\Vert h\right\Vert _{\mathcal{L}_{\infty}\left(\mathbb{K}^{\complement}\right)}=0$.
Apply Lemma \ref{lem approx ident} to show the existence of a $k\left(\epsilon\right)$,
such that
\[
\left\Vert h-g_{k}\star h\right\Vert _{\mathcal{L}_{\infty}}<\frac{\epsilon}{4}\text{,}
\]
for all $k\ge k\left(\epsilon\right)$. With a fixed $k=k\left(\epsilon\right)$, apply Lemma \ref{lem core lemma} to show that there exists
a $h_{m}^{g}\in\mathcal{M}_{m}^{g}$, such that
\[
\left\Vert g_{k\left(\epsilon\right)}\star h-h_{m}^{g}\right\Vert _{\mathcal{L}_{\infty}}<\frac{\epsilon}{4}\text{.}
\]

By the triangle inequality, we have
\begin{align}
\left\Vert h-h_{m}^{g}\right\Vert _{\mathcal{L}_{\infty}}&\le\left\Vert h-g_{k\left(\epsilon\right)}\star h\right\Vert _{\mathcal{L}_{\infty}}+\left\Vert g_{k\left(\epsilon\right)}\star h-h_{m}^{g}\right\Vert _{\mathcal{L}_{\infty}}\nonumber\\&<\frac{\epsilon}{4}+\frac{\epsilon}{4}=\frac{\epsilon}{2}\text{.}\label{eq: proof (a) 2}
\end{align}
The proof is complete by substitution of (\ref{eq: proof (a) 2})
into (\ref{eq: proof (a) 1}).

\subsection{Proof of Theorem \ref{thm main res}(b)}

For any $\epsilon>0$ and compact $\mathbb{K}\subset\mathbb{R}^{n}$,
it suffices to show that there exists a sufficiently large enough $m(\epsilon) \in \mathbb{N}$ so that for all $m \geq m(\epsilon),h_{m}^{g}\in\mathcal{M}_{m}^{g},$ such that $\left\Vert f-h_{m}^{g}\right\Vert _{\mathcal{L}_{\infty}(\mathbb{K})}<\epsilon$.

By Lemma \ref{lem makarov 9.3.3}, we can find a $k\left(\epsilon,\mathbb{K}\right)\in\mathbb{N}$,
such that
\begin{equation}
\left\Vert f-g_{k}\star f\right\Vert _{\mathcal{L}_{\infty}\left(\mathbb{K}\right)}<\frac{\epsilon}{3}\text{,}\label{eq: lemma bound add 1}
\end{equation}
for every $k\ge k\left(\epsilon,\mathbb{K}\right)$. Since $g\in\mathcal{C}_{0}$,
$\left\Vert g\right\Vert _{\mathcal{L}_{\infty}}\le C<\infty$ for some positive $C$, by
Lemma \ref{lem C0 in Cb}. For any $k,r\in\mathbb{N}$, via Young's convolution inequality:

\begin{align}
\left\Vert g_{k}\star f-g_{k}\star\left(\mathbf{1}_{\bar{\mathbb{B}}_{r}}f\right)\right\Vert _{\mathcal{L}_{\infty}} \le k^{n}C\int\left(\mathbf{1}_{\bar{\mathbb{B}}_{r}^{\complement}}f\right)\text{d}\lambda=k^{n}C\left\Vert f-\mathbf{1}_{\bar{\mathbb{B}}_{r}}f\right\Vert _{\mathcal{L}_{1}}\text{.}\label{eq: lemma bounded 1}
\end{align}

For fixed $k$, we may choose $r\left(\epsilon,\mathbb{K}\right)\in\mathbb{N}$,
using Lemma \ref{lem Lesbegue dom con theorem}, so that $\left\Vert f-\mathbf{1}_{\bar{\mathbb{B}}_{r}}f\right\Vert _{\mathcal{L}_{1}}\le\epsilon/\left(3k^{n}C\right)$
and thus the final term of (\ref{eq: lemma bounded 1}) is bounded
from above by $\epsilon/3$ for all $r\ge r\left(\epsilon,\mathbb{K}\right)$.
Thus, for $k=k\left(\epsilon,\mathbb{K}\right)$
and, $r\ge r\left(\epsilon,\mathbb{K}\right)$
\begin{equation}
\left\Vert g_{k\left(\epsilon,\mathbb{K}\right)}\star f-g_{k\left(\epsilon,\mathbb{K}\right)}\star\left(\mathbf{1}_{\bar{\mathbb{B}}_{r\left(\epsilon,\mathbb{K}\right)}}f\right)\right\Vert _{\mathcal{L}_{\infty}}\le\frac{\epsilon}{3}\text{.}\label{eq: lemma bound add 2}
\end{equation}

Using Lemma \ref{lem core lemma}, with approximand $\mathbf{1}_{\bar{\mathbb{B}}_{r\left(\epsilon,\mathbb{K}\right)}}f$,
component density $g$, compact set $\bar{\mathbb{B}}_{r\left(\epsilon,\mathbb{K}\right)}$,
$h=\mathbf{1}_{\bar{\mathbb{B}}_{r\left(\epsilon,\mathbb{K}\right)}}f$,
and with $k=k\left(\epsilon,\mathbb{K}\right)$ fixed, we have the
existence of a density $h_{m}^{g}\in\mathcal{M}_{m}^{g}, m \geq m(\epsilon) \in \mathbb{N},$ such that
\begin{equation}
\left\Vert g_{k\left(\epsilon,\mathbb{K}\right)}\star\left(\mathbf{1}_{\bar{\mathbb{B}}_{r\left(\epsilon,\mathbb{K}\right)}}f\right)-h_{m}^{g}\right\Vert _{\mathcal{L}_{\infty}}\le\frac{\epsilon}{3}\text{.}\label{eq: lemma bound add 3}
\end{equation}
We obtain the desired result by combining (\ref{eq: lemma bound add 1}),
(\ref{eq: lemma bound add 2}), and (\ref{eq: lemma bound add 3}),
via the triangle inequality.

\subsection{Proof of Theorem \ref{thm main res}(c)}

The technique used to prove Theorem \ref{thm main res}(c) is different
to those used in the previous sections. Here, we use a result of \cite{Donahue:1997aa} that generalizes the classic Barron-Jones
Hilbert space approximation result (cf. \cite{Jones:1992aa} and
\cite{Barron:1993aa}) to Banach spaces.

To prove Theorem \ref{thm main res}(c), it suffices to show that for every $\epsilon>0$, there exists a sufficiently large enough $m(\epsilon) \in \mathbb{N}$ so that for all $m \geq m(\epsilon),h_{m}^{g}\in\mathcal{M}_{m}^{g}$
such that $\left\Vert f-h_m^g\right\Vert _{\mathcal{L}_{p}}<\epsilon$.
Begin by applying Corollary \ref{cor cheney ch20 th4} to obtain a
$k\left(\epsilon\right)$, such that
\begin{equation}
\left\Vert f-g_{k}\star f\right\Vert _{\mathcal{L}_{p}}<\frac{\epsilon}{2}\label{eq: lp triangle 1}
\end{equation}
for all $k\ge k\left(\epsilon\right)$.

For some pdf $g$ and fixed $k\in\mathbb{N}$, let us define the class
\[
\mathcal{G}_{g}^{k}=\left\{ h:h\left(x\right)=k^{n}g\left(kx-k\mu\right)\text{, }\mu\in\mathbb{R}^{n}\right\} \text{,}
\]
write the $m\text{-point}$ convex hull of $\mathcal{G}_{g}^{k}$
as
\begin{align*}
&\text{Conv}_{m}\left(\mathcal{G}_{g}^{k}\right)=\left\{ h:h=\sum_{i=1}^{m}c_{i}g_{i}\text{, }g_{i}\in\mathcal{G}_{g}^{k}\text{, }c\in\mathbb{S}^{m-1}\text{, }i\in\left[m\right]\right\} \text{,}
\end{align*}
and call $\text{Conv}_{\infty}\left(\mathcal{G}_{g}^{k}\right)=\text{Conv}\left(\mathcal{G}_{g}^{k}\right)$
the convex hull of $\mathcal{G}_{g}^{k}$. We further say that $\overline{\text{Conv}}\left(\mathcal{G}_{g}^{k}\right)$
is the closure of $\text{Conv}\left(\mathcal{G}_{g}^{k}\right)$.

Because $g$ is a pdf, $g\in\mathcal{C}_{0}\subset\mathcal{C}_{b}$,
and $\mathcal{C}_{b}\subset\mathcal{L}_{\infty}$, we observe that
$g\in\mathcal{L}_{1}\cap\mathcal{L}_{\infty}$. Thus, $g\in\mathcal{L}_{p}$,
for any $1<p<\infty$, by Lemma \ref{lem p and r in q}. Since $g$
is a pdf and $f\in\mathcal{L}_{p}$, we have the existence of $g_{k}\star f$
and the fact that $\left\Vert g_{k}\star f\right\Vert _{\mathcal{L}_{p}}$
is finite.

Furthermore, for any $\psi \in\mathcal{G}_{g}^{k}$, since $g\in\mathcal{L}_{p} $ and by definition of  $\mathcal{G}_{g}^{k}$,
we have $\left\Vert \psi\right\Vert _{\mathcal{L}_{p}}\le k^{n/p}\left\Vert g\right\Vert _{\mathcal{L}_{p}}.$ Thus, we have
\begin{equation}
\left\Vert \psi-g_{k}\star f\right\Vert _{\mathcal{L}_{p}}\le\left\Vert \psi\right\Vert _{\mathcal{L}_{p}}+\left\Vert g_{k}\star f\right\Vert _{\mathcal{L}_{p}}\le K\text{,}\label{eq: Main c 1}
\end{equation}
by choosing $K=k^{n/p}\left\Vert g\right\Vert _{\mathcal{L}_{p}}+\left\Vert g_{k}\star f\right\Vert _{\mathcal{L}_{p}}>0$.

Following \cite{van-de-Geer:2003aa}, we can write the closure of $\mathcal{G}_{g}^{k}$
as
\begin{align*}
&\overline{\text{Conv}}\left(G_{g}^{k}\right)=\left\{ h:h\left(x\right)=\int k^{n}g\left(kx-k\mu\right)f\left(\mu\right)\text{d}\lambda\left(\mu\right),f\text{ is a pdf}\right\} \text{,}
\end{align*}
and thus we immediately have $g_{k}\star f\in\overline{\text{Conv}}\left(G_{g}^{k}\right)$.
Combined with (\ref{eq: Main c 1}), we can apply Lemma \ref{lem Donahue}
to obtain the conclusion that there exists a function $h_{m}^{g}\in\text{Conv}_{m}\left(\mathcal{G}_{g}^{k\left(\epsilon\right)}\right)\subset\mathcal{M}_{m}^{g}$,
such that
\[
\left\Vert h_{m}^{g}-g_{k\left(\epsilon\right)}\star f\right\Vert _{\mathcal{L}_{p}}\le\frac{KC_{p}}{m^{1-1/\alpha}}\text{,}
\]
where $\alpha=\min\left\{ p,2\right\} $ and $C_{p}$ is a finite
constant. Since $p>1$, $m^{1-1/\alpha}$ is strictly increasing,
and hence we can choose an $m\left(\epsilon\right)\in\mathbb{N}$,
such that for all $m\ge m\left(\epsilon\right)$,
\begin{equation}
\left\Vert h_{m}^{g}-g_{k\left(\epsilon\right)}\star f\right\Vert _{\mathcal{L}_{p}}\le\frac{\epsilon}{2}\text{.}\label{eq: lp triangle 2}
\end{equation}
The proof is then completed by combining (\ref{eq: lp triangle 1}) and
(\ref{eq: lp triangle 2}) via the triangle inequality.

\subsection{Proof of Theorem \ref{thm main res}(d) and Theorem \ref{thm main res}(e)}

By Theorem \ref{thm main res}(a), there exists a sequence $\left\{ h_{m}^{g}\right\} $
that uniformly converges to $f$, as $m\rightarrow\infty$. Thus,
by Lemma \ref{lem Bartle},  $\left\{ h_{m}^{g}\right\} $ almost uniformly converges
to $f$ and also converges almost everywhere, to $f$, with respect
to any measure $\nu$. We prove Theorem \ref{thm main res}(d) by
setting $\nu=\lambda,$ and we prove Theorem \ref{thm main res}(e)
by not specifying $\nu$.

\subsection{Proof of Theorem \ref{thm main res}(f)}

It suffices to show that for any $\epsilon>0$, there exists a sufficiently large enough $m(\epsilon) \in \mathbb{N}$ so that for all $m \geq m(\epsilon),h_{m}^{g}\in\mathcal{M}_{m}^{g}$,
where $g\in\mathcal{V}$,
such that $\left\Vert f-h_{m}^{g}\right\Vert _{\mathcal{L}_{1}}<\epsilon$.
Begin by applying Lemma \ref{lem Lesbegue dom con theorem} in order
to find a $r\left(\epsilon\right)\in\mathbb{N}$, for any $\epsilon>0$,
such that for all $r\ge r\left(\epsilon\right)$,
\begin{equation}
\left\Vert f-\mathbf{1}_{\bar{\mathbb{B}}_{r}}f\right\Vert _{\mathcal{L}_{1}}\le\frac{\epsilon}{24}<\frac{\epsilon}{2}\text{,}\label{eq: epsilon/24}
\end{equation}
where $0\le\mathbf{1}_{\bar{\mathbb{B}}_{r}}f\le f$, and $\mathbf{1}_{\bar{\mathbb{B}}_{r}}f\in\mathcal{C}_{c}$
with compact support $\bar{\mathbb{B}}_{r}$.

Let $\mathbb{K}=\bar{\mathbb{B}}_{r}$ and apply the triangle inequality
to obtain
\begin{align*}
\left\Vert f-h_{m}^{g}\right\Vert _{\mathcal{L}_{1}}&\le\left\Vert f-\mathbf{1}_{\mathbb{K}}f\right\Vert _{\mathcal{L}_{1}}+\left\Vert \mathbf{1}_{\mathbb{K}}f-h_{m}^{g}\right\Vert _{\mathcal{L}_{1}}\\&\le\frac{\epsilon}{2}+\left\Vert \mathbf{1}_{\mathbb{K}}f-h_{m}^{g}\right\Vert _{\mathcal{L}_{1}}\text{.}
\end{align*}
Hence we need to show that there exists a function $h_{m}^{g}\in\mathcal{M}_{m}^{g}$,
such that
\[
\left\Vert \mathbf{1}_{\mathbb{K}}f-h_{m}^{g}\right\Vert _{\mathcal{L}_{1}}\le\frac{\epsilon}{2}\text{.}
\]

Since $g\in\mathcal{V}$ and $g_k(x) = k^n g(kx)$, by substitution, we have
\begin{equation}
g_{k}\left(x\right)\le\frac{\beta k^{-\theta}}{\left(k^{-1}+\left\Vert x\right\Vert _{2}\right)^{n+\theta}}\text{,}\label{eq: function in V}
\end{equation}
where $\beta,\theta>0$ are independent of $k$. By Lemma \ref{lem makarov 9.3.3}
and Corollary \ref{cor cheney ch20 th4}, we can obtain a $k_{1}\left(\epsilon\right)$,
such that for all $k\ge k_{1}\left(\epsilon\right)$,
\begin{equation}
\left\Vert \mathbf{1}_{\mathbb{K}}f-g_{k}\star\left(\mathbf{1}_{\mathbb{K}}f\right)\right\Vert _{\mathcal{L}_{1}}\le\frac{\epsilon}{4}\text{.}\label{eq: eps/4}
\end{equation}

Suppose that $\gamma>1$ and let $$\mathbb{K}_{k}=\left\{ x\in\mathbb{R}^{n}:\text{dist}\left(x,\mathbb{K}\right)\le k^{-\gamma}\right\}, $$ where
\[
\text{dist}\left(x,\mathbb{X}\right)=\inf\left\{ \left\Vert x-y\right\Vert _{2}:y\in\mathbb{X}\right\} \text{.}
\]
By construction, $\lambda\left(\mathbb{K}_{k}\right)=\lambda\left(\mathbb{K}\right)+O\left(k^{-\gamma}\right)$
and thus there exists a $k_{2}$ such that $\lambda\left(\mathbb{K}_{k}\right)\le\lambda\left(\mathbb{K}\right)+1$,
for any $k\ge k_{2}$.

For any $k>k_{2}$, we can show that
\begin{equation}
\left\Vert g_{k}\star\left(\mathbf{1}_{\mathbb{K}}f\right)-h_{m-1}^{g}\right\Vert _{\mathcal{L}_{1}\left(\mathbb{K}_{k}\right)}<\frac{\epsilon}{8}\text{.}\label{eq: eps/8}
\end{equation}
To do so, firstly, for any $x\in\mathbb{R}^{n}$,
\begin{align*}
g_{k}\star\left(\mathbf{1}_{\mathbb{K}}f\right) & =\int\mathbf{1}_{\mathbb{K}}g_{k}\left(x-y\right)f\left(y\right)\text{d}\lambda\left(y\right)\\
& =\int\mathbf{1}_{k\mathbb{K}}g\left(kx-z\right)f\left(\frac{z}{k}\right)\text{d}\lambda\left(z\right)\text{.}
\end{align*}

To obtain a Riemann sum approximation of $g_{k}\star\left(\mathbf{1}_{\mathbb{K}}f\right)$,
we use an argument analogous to that of Lemma \ref{lem core lemma}.
That is, we partition $k\mathbb{K}$ into $m-1$ disjoint Borel sets
$\Pi_{m}=\left\{ \mathbb{A}_{1},\dots,\mathbb{A}_{m-1}\right\} $,
and we approximate $g_{k}\star\left(\mathbf{1}_{\mathbb{K}}f\right)$
by a $h_{m-1}^{g}\in\mathcal{M}_{m-1}^{g}$, where for each $i\in\left[m-1\right]$,
$k_{i}=k$, $z_{i}\in\mathbb{A}_{i}$, and
\[
c_{i}=k^{-n}\int\mathbf{1}_{\mathbb{A}_{i}}f\left(\frac{z}{k}\right)\text{d}\lambda\left(z\right)\text{.}
\]
Define $k_{m}\in\mathbb{R}_{+}$, $z_{m}\in\mathbb{R}^{n}$, and $c_{m}=1-\sum_{i=1}^{m-1}c_{i}$,
where
\begin{equation}
c_{m}=\int f\text{d}\lambda-\int\mathbf{1}_{\mathbb{K}}f\text{d}\lambda=\left\Vert f-\mathbf{1}_{\mathbb{K}}f\right\Vert _{\mathcal{L}_{1}}\le\frac{\epsilon}{24}\label{eq: c bound}
\end{equation}
by (\ref{eq: epsilon/24}). Then, by a similar argument to Lemma \ref{lem core lemma},
$c_{i}\ge0$ for all $i\in\left[m\right]$ and $\sum_{i=1}^{m}c_{i}=1$.
Thus, we may define an element $h_{m}^{g}\in\mathcal{M}_{m}^{g}$
via the parameters above.

For sufficiently large $k\ge k_{2}$, we use Lemma \ref{lem core lemma}
to show that
\[
\left\Vert g_{k}\star\left(\mathbf{1}_{\mathbb{K}}f\right)-h_{m-1}^{g}\right\Vert _{\mathcal{L}_{\infty}\left(\mathbb{K}_{k}\right)}<\frac{\epsilon}{8\left(\lambda\left(\mathbb{K}\right)+1\right)}\text{,}
\]
which implies
\begin{align}
\left\Vert g_{k}\star\left(\mathbf{1}_{\mathbb{K}}f\right)-h_{m-1}^{g}\right\Vert _{\mathcal{L}_{1}\left(\mathbb{K}_{k}\right)} & <\int\mathbf{1}_{\mathbb{K}_{k}}\frac{\epsilon}{8\left(\lambda\left(\mathbb{K}\right)+1\right)}\text{d}\lambda\nonumber \\
& <\frac{\epsilon\lambda\left(\mathbb{K}_{k}\right)}{8\left(\lambda\left(\mathbb{K}\right)+1\right)}<\frac{\epsilon}{8}\text{,}\label{eq: eps 1/8 2}
\end{align}
and thus (\ref{eq: eps/8}) is proved. Using (\ref{eq: eps/8}), we
write
\begin{eqnarray*}
	&&\left\Vert g_{k}\star\left(\mathbf{1}_{\mathbb{K}}f\right)-h_{m}^{g}\right\Vert _{\mathcal{L}_{1}} \\& = & \left\Vert g_{k}\star\left(\mathbf{1}_{\mathbb{K}}f\right)-h_{m-1}^{g}-c_{m}k_{m}^{n}g\left(k_{m}x-z_{m}\right)\right\Vert _{\mathcal{L}_{1}}\\
	& \le & \left\Vert g_{k}\star\left(\mathbf{1}_{\mathbb{K}}f\right)-h_{m-1}^{g}\right\Vert _{\mathcal{L}_{1}\left(\mathbb{K}_{k}\right)}\\ &&+\left\Vert g_{k}\star\left(\mathbf{1}_{\mathbb{K}}f\right)-h_{m-1}^{g}\right\Vert _{\mathcal{L}_{1}\left(\mathbb{K}_{k}^{\complement}\right)}\\
	&  & +\left\Vert c_{m}k_{m}^{n}g\left(k_{m}x-z_{m}\right)\right\Vert _{\mathcal{L}_{1}}\\
	& \le & \frac{\epsilon}{8}+c_{m}+\left\Vert g_{k}\star\left(\mathbf{1}_{\mathbb{K}}f\right)\right\Vert _{\mathcal{L}_{1}\left(\mathbb{K}_{k}^{\complement}\right)}+\left\Vert h_{m-1}^{g}\right\Vert _{\mathcal{L}_{1}\left(\mathbb{K}_{k}^{\complement}\right)}\text{,}
\end{eqnarray*}
where $\left\Vert c_{m}k_{m}^{n}g\left(k_{m}x-z_{m}\right)\right\Vert _{\mathcal{L}_{1}}\le c_{m}$
since $k_{m}^{n}g\left(k_{m}x-z_{m}\right)$ is a pdf. The aim is
now to prove that
\[
\left\Vert g_{k}\star\left(\mathbf{1}_{\mathbb{K}}f\right)\right\Vert _{\mathcal{L}_{1}\left(\mathbb{K}_{k}^{\complement}\right)}<\frac{\epsilon}{24}\text{ and }\left\Vert h_{m-1}^{g}\right\Vert _{\mathcal{L}_{1}\left(\mathbb{K}_{k}^{\complement}\right)}<\frac{\epsilon}{24}\text{.}
\]

Using polar coordinates and \eqref{eq: function in V}, we have
\begin{align*}
&\int\mathbf{1}_{\left\{ x:\left\Vert x-y\right\Vert_2 >k^{-\gamma}\right\} }g_{k}\left(x-y\right)\text{d}\lambda\left(x\right) \\& \le\int\frac{\mathbf{1}_{\left\{ x:\left\Vert x-y\right\Vert_2 >k^{-\gamma}\right\} }\beta k^{-\theta}}{\left(k^{-1}+\left\Vert x-y\right\Vert _{2}\right)^{n+\theta}}\text{d}\lambda\left(x\right)\\
& =\beta A_{n}k^{-\theta}\int\frac{\mathbf{1}_{\left(k^{-\gamma},\infty\right)}r^{n-1}}{\left(k^{-1}+r\right)^{n+\theta}}\text{d}\lambda\left(r\right)\\
& \le\beta A_{n}k^{-\theta}\int\mathbf{1}_{\left(k^{-\gamma},\infty\right)}r^{-\theta-1}\text{d}\lambda\left(r\right)\\
& =\beta A_{n}k^{\theta\left(\gamma-1\right)}/\theta\text{,}
\end{align*}
where $A_{n}$ is the surface area of a unit sphere embedded in $\mathbb{R}^{n}$.
We then have

\begin{align*}
&\left\Vert g_{k}\star\left(\mathbf{1}_{\mathbb{K}}f\right)\right\Vert _{\mathcal{L}_{1}\left(\mathbb{K}_{k}^{\complement}\right)} \\& =\int\int\mathbf{1}_{\left\{ y\in\mathbb{K}\right\} }\mathbf{1}_{\left\{ x\in\mathbb{K}_{k}^{\complement}\right\} }f\left(y\right)g_{k}\left(x-y\right)\text{d}\lambda\left(x\right)\text{d}\lambda\left(y\right)\\
& \le\left\Vert \mathbf{1}_{\mathbb{K}}f\right\Vert _{\mathcal{L}_{\infty}}\int\int\mathbf{1}_{\left\{ y\in\mathbb{K}\right\} }\frac{\mathbf{1}_{\left\{ x:\left\Vert x-y\right\Vert_2 >k^{-\gamma}\right\} }\beta k^{-\theta}}{\left(k^{-1}+\left\Vert x-y\right\Vert _{2}\right)^{n+\theta}}\text{d}\lambda\left(x\right)\text{d}\lambda\left(y\right)\\
& \le\left\Vert \mathbf{1}_{\mathbb{K}}f\right\Vert _{\mathcal{L}_{\infty}}\lambda\left(\mathbb{K}\right)\beta A_{n}k^{\theta\left(\gamma-1\right)}/\theta\text{,}
\end{align*}
which implies that we can choose a $k_{3}\in\mathbb{N}$, such that
for all $k \ge k_{3}$,
\begin{equation}
\left\Vert g_{k}\star\left(\mathbf{1}_{\mathbb{K}}f\right)\right\Vert _{\mathcal{L}_{1}\left(\mathbb{K}_{k}^{\complement}\right)}<\frac{\epsilon}{24}\text{.}\label{eq: eps 1/24 2}
\end{equation}

Lastly, we write
\begin{align*}
&\left\Vert h_{m-1}^{g}\right\Vert _{\mathcal{L}_{1}\left(\mathbb{K}_{k}^{\complement}\right)} \\& =\int\mathbf{1}_{\mathbb{K}_{k}^{\complement}}\sum_{i=1}^{m-1}c_{i}k^{n}g\left(kx-z_{i}\right)\text{d}\lambda\\
& =\sum_{i=1}^{m-1}\int\mathbf{1}_{\mathbb{K}_{k}^{\complement}}\left[k^{-n}\int\mathbf{1}_{\mathbb{A}_{i}}f\left(\frac{z}{k}\right)\text{d}\lambda\left(z\right)\right]k^{n}g\left(kx-z_{i}\right)\text{d}\lambda\left(x\right)\\
& \le\left\Vert \mathbf{1}_{\mathbb{K}}f\right\Vert _{\mathcal{L}_{\infty}}\sum_{i=1}^{m-1}k^{-n}\lambda\left(\mathbb{A}_{i}\right)\int\mathbf{1}_{\mathbb{K}_{k}^{\complement}}g_{k}\left(x-\frac{z_{i}}{k}\right)\text{d}\lambda\\
& \le\left\Vert \mathbf{1}_{\mathbb{K}}f\right\Vert _{\mathcal{L}_{\infty}}\sum_{i=1}^{m-1}k^{-n}\lambda\left(\mathbb{A}_{i}\right)\frac{\beta A_{n}k^{\theta\left(\gamma-1\right)}}{\theta}\\
& \le\left\Vert \mathbf{1}_{\mathbb{K}}f\right\Vert _{\mathcal{L}_{\infty}}\lambda\left(\mathbb{K}\right)\beta A_{n}k^{\theta\left(\gamma-1\right)}/\theta\text{,}
\end{align*}
which implies that we can choose the same $k_{3}$ as above to obtain
the bound
\begin{equation}
\left\Vert h_{m-1}^{g}\right\Vert _{\mathcal{L}_{1}\left(\mathbb{K}_{k}^{\complement}\right)}<\frac{\epsilon}{24}\text{,}\label{eq: eps 1/24 3}
\end{equation}
for any $k \ge k_{3}$.

Thus, we obtain the bound $\left\Vert \mathbf{1}_{\mathbb{K}}f-h_{m}^{g}\right\Vert_{\mathcal{L}_1} <\epsilon/2$,
for all $k \ge \max\left\{ k_{1},k_{2},k_{3}\right\} $, by combining
(\ref{eq: eps/4}), (\ref{eq: eps/8}), (\ref{eq: c bound}), (\ref{eq: eps 1/8 2}),
(\ref{eq: eps 1/24 2}), and (\ref{eq: eps 1/24 3}), via the triangle
inequality. The result is proved by combing the bound above, with
(\ref{eq: epsilon/24}), for an appropriately large $r\left(\epsilon\right)\in\mathbb{N}$.

\section{Comments and discussion} \label{Comments}

\subsection{Relationship to Theorem \ref{thm zeevi}}

In the proof of Theorem \ref{thm zeevi}, the famous Hilbert space
approximation result of \cite{Jones:1992aa} and \cite{Barron:1993aa}
was used to bound the $\mathcal{L}_{2}$ norm between any approximand
$f\in\mathcal{L}_{2}$ and a convex combination of bounded functions
in $\mathcal{L}_{2}$. This approximation theorem is exactly the $p=2$
case of the more general theorem of \cite{Donahue:1997aa}, as presented
in Lemma \ref{lem Donahue}. Thus, one can view Theorem \ref{thm main res}(c)
as the $p\in\left(1,\infty\right)$ generalization of Theorem \ref{thm zeevi}.

\subsection{The class \texorpdfstring{$\mathcal{W}$}{Lg} is a proper subset of the class \texorpdfstring{$\mathcal{C}_{0}$}{Lg}}

Here, we comment on the nature of class $\mathcal{W}$, which was investigated by \cite{Bacharoglou:2010aa} and \cite{Nestoridis:2011aa}. We recall that \cite{Bacharoglou:2010aa} conjectured that Theorem
\ref{thm Bacharoglou 2010} generalizes from $g=\phi$ to $g\in\mathcal{V}$.
In Theorem \ref{thm main res}(a)--(e), we assume that $g\in\mathcal{C}_{0}$.
We can demonstrate that $g\in\mathcal{C}_{0}$ is a strictly weaker
condition than $g\in\mathcal{V}$ or $g\in\mathcal{W}$.

For example, consider the function in $g:\mathbb{R}\rightarrow\mathbb{R}$ such that $g\left(x\right) = 0$ if $x <0$ and 

\begin{align*}
g\left(x\right) &= \sum_{i=1}^{\infty}\frac{2^{2i}}{i}\Bigg[\left(x-i+1\right)^{2i}\mathbf{1}_{\left\{ i-1\le x<i-1/2\right\} }   +\left(x-i\right)^{2i}\mathbf{1}_{\left\{ i-1/2\le x<i\right\} }\Bigg] \text{ if }x\ge0\text{,}
\end{align*}
and note that
\[
\int\mathbf{1}_{\left(-1/2,1/2\right)}\frac{\left(2x\right)^{2i}}{i}\text{d}\lambda=\frac{1}{2i^{2}+i}<\frac{1}{i^{2}}\text{.}
\]
Since $\sum_{i=1}^{\infty}\left(1/i^{2}\right)=\pi^2/6$, $g\in\mathcal{L}_{1}$.
Furthermore, $g$ is continuous since all stationary points of $g$
are continuous. In $\mathbb{R}$, $g\in\mathcal{C}_{0}$ if
\[
\lim_{x\rightarrow\pm\infty}g\left(x\right)=0\text{.}
\]
For $x\le0$, we observe that $g=0$ and thus the left limit is satisfied.
On the right, for any $1/\epsilon>0$, we have $x\left(\epsilon\right)\ge\left\lceil \epsilon\right\rceil -1/2$,
so that $g\left(x\right)<1/\epsilon$, for all $x>x\left(\epsilon\right)$, where $\left\lceil \cdot\right\rceil$ is the ceiling operator.
Therefore, $g\in\mathcal{C}_{0}$.

Within each interval $i-1\le x<i$, we observe that $g$ is locally
maximized at $x=i-1/2$. The local maximum corresponding to each of
these points is $1/i$. Thus $g\notin\mathcal{W}$, since
\[
\sum_{i=1}^{\infty}\frac{1}{i}<\sum_{y\in\mathbb{Z}}\sup_{x\in\left[0,1\right]}\text{ }\left|g\left(x+y\right)\right|\text{,}
\]
where $\sum_{i=1}^{\infty}\left(1/i\right)=\infty$. Furthermore,
$g\notin\mathcal{V}$ since $\mathcal{V}\subset\mathcal{W}$.

\subsection{Convergence in measure}

Along with the conclusions of Theorem \ref{thm main res}(d) and (e),
Lemma \ref{lem Bartle} also implies convergence in measure. That
is, if $\nu$ is a $\sigma\text{-finite}$ Borel measure on $\mathbb{R}^{n}$,
then for any $\nu\text{-measurable}$ $f$, there exists a sequence
$\left\{ h_{m}^{g}\right\} $, such that for any $\epsilon>0$,
\[
\lim_{m\rightarrow\infty}\upsilon\left(\left\{ x\in\mathbb{R}^{n}:\left|f\left(x\right)-h_{m}^{g}\left(x\right)\right|\ge\epsilon\right\} \right)=0\text{.}
\]

\appendix

\section{Technical results}

Throughout the main text, we utilize a number of established technical
results. For the convenience of the reader, we append these results
within this Appendix. Sources from which we draw the unproved results
are provided at the end of the section.
\begin{lem}
	\label{lem makarov 9.3.3}Let $\left\{ g_{k}\right\} $ be a sequence of pdfs
	in $\mathcal{L}_{1}$ and for every $\delta>0$
	\[
	\lim_{k\rightarrow\infty}\int\mathbf{1}_{\left\{ x:\left\Vert x\right\Vert _{2}>\delta\right\} }g_{k}\text{d}\lambda=0\text{.}
	\]
	Then, for all $f\in\mathcal{L}_{p}$ and $1\le p<\infty$,
	\[
	\lim_{k\rightarrow\infty}\left\Vert g_{k}\star f-f\right\Vert _{\mathcal{L}_{p}}=0\text{.}
	\]
	Furthermore, for all $f\in\mathcal{C}_{b}$ and any compact $\mathbb{K}\subset\mathbb{R}^{n}$,
	\[
	\lim_{k\rightarrow\infty}\left\Vert g_{k}\star f-f\right\Vert _{\mathcal{L}_{\infty}\left(\mathbb{K}\right)}=0\text{.}
	\]
\end{lem}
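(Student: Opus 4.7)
Both conclusions are instances of the standard approximate-identity argument: write the difference $g_k \star f - f$ as an integral of $g_k(y)$ against the translation defect of $f$, then split the integration at a small radius $\delta$ so that the inner portion is small by (local) continuity of translation, while the outer portion is small by the mass-concentration hypothesis on $\{g_k\}$.

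\textbf{Part (i).} Since each $g_k$ is a pdf,
\[
(g_k \star f)(x) - f(x) = \int g_k(y)\bigl[f(x-y) - f(x)\bigr]\, d\lambda(y).
\]
Applying Minkowski's integral inequality in the $\mathcal{L}_p$ variable $x$,
\[
\|g_k \star f - f\|_{\mathcal{L}_p} \le \int g_k(y)\,\|\tau_y f - f\|_{\mathcal{L}_p}\, d\lambda(y),
\]
where $\tau_y f(x) = f(x-y)$. For $\epsilon > 0$, invoke the classical continuity of translation in $\mathcal{L}_p$ (a consequence of the density of $\mathcal{C}_c$ in $\mathcal{L}_p$ for $1 \le p < \infty$) to choose $\delta > 0$ such that $\|\tau_y f - f\|_{\mathcal{L}_p} < \epsilon/2$ whenever $\|y\|_2 < \delta$. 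Then split the integral at $\|y\|_2 = \delta$: the inner piece contributes at most $(\epsilon/2)\int g_k\, d\lambda = \epsilon/2$, while the outer piece is bounded by
\[
2\|f\|_{\mathcal{L}_p}\int \mathbf{1}_{\{y:\|y\|_2 > \delta\}} g_k\, d\lambda,
\]
which is below $\epsilon/2$ for all $k$ sufficiently large by the mass-concentration hypothesis.

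\textbf{Part (ii).} For each $x \in \mathbb{K}$,
\[
|(g_k \star f)(x) - f(x)| \le \int g_k(y)\,|f(x-y) - f(x)|\, d\lambda(y).
\]
Since $f \in \mathcal{C}_b$ is continuous on the compact set $\mathbb{K} + \bar{\mathbb{B}}_1$, it is uniformly continuous there, so for any $\epsilon > 0$ there exists $\delta \in (0,1)$ such that $|f(x-y) - f(x)| < \epsilon/2$ for every $x \in \mathbb{K}$ and every $\|y\|_2 < \delta$. Splitting the integral at $\|y\|_2 = \delta$, the inner piece contributes at most $\epsilon/2$ uniformly in $x$, while the outer piece is dominated by $2\|f\|_{\mathcal{L}_\infty}\int \mathbf{1}_{\{y:\|y\|_2 > \delta\}} g_k\, d\lambda$, which is below $\epsilon/2$ for large $k$ by the hypothesis. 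Since both bounds are uniform in $x \in \mathbb{K}$, we obtain $\|g_k \star f - f\|_{\mathcal{L}_\infty(\mathbb{K})} < \epsilon$ for all sufficiently large $k$.

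\textbf{Main obstacle.} The only non-elementary ingredient is the $\mathcal{L}_p$-continuity of translation used in part (i); in part (ii) the corresponding ingredient is the uniform continuity of $f$ on a compact neighborhood of $\mathbb{K}$, which is immediate. Once these are in hand, the two claims are routine consequences of the $\delta$-split estimate outlined above, and part (ii) does not require $f$ to be uniformly continuous globally on $\mathbb{R}^n$, which is why the compactness of $\mathbb{K}$ is essential there.
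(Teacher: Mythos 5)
Your proof is correct. The paper does not prove this lemma itself --- it cites it as Theorem 9.3.3 of Makarov and Podkorytov (and Theorem 2 of Cheney--Light, Ch.~20) --- but your argument is the standard one those sources use, and it is the same $\delta$-split of the approximate-identity integral (inner piece controlled by continuity of translation, outer piece by the mass-concentration hypothesis) that the paper itself deploys in its proof of Lemma~\ref{lem approx ident}; both the Minkowski-integral-inequality step in part (i) and the restriction to uniform continuity on the compact neighborhood $\mathbb{K}+\bar{\mathbb{B}}_1$ in part (ii) are handled correctly.
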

The sequences $\left\{ g_{k}\right\} $ from Lemma \ref{lem makarov 9.3.3}
are often called approximate identities or approximations of the identity.
A simple construction of approximate identities is by taking dilations
$g_{k}\left(x\right)=k^{n}g\left(kx\right)$, which yields the following
corollary.
\begin{cor}
	\label{cor cheney ch20 th4}Let $g$ be a pdf. Then the sequence of
	dilations $\left\{ g_{k}:g_{k}\left(x\right)=k^{n}g\left(kx\right)\right\} $,
	satisfies the hypothesis of Lemma \ref{lem makarov 9.3.3} and hence
	permits its conclusion.
\end{cor}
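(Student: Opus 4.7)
The plan is to verify the two hypotheses of Lemma \ref{lem makarov 9.3.3} directly for the dilation family $\{g_k\}$ via a single change of variables. First, I would check that each $g_k$ is itself a pdf in $\mathcal{L}_1$: non-negativity is immediate from $g\ge 0$, and the substitution $z=kx$ (with Jacobian factor $k^n$) gives
\[
\int g_k\,\text{d}\lambda \;=\; \int k^{n} g(kx)\,\text{d}\lambda(x) \;=\; \int g(z)\,\text{d}\lambda(z) \;=\; 1.
\]

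Second, for the tail condition, I would apply the same substitution to rewrite, for any fixed $\delta>0$,
\[
\int \mathbf{1}_{\{x:\|x\|_2>\delta\}} g_k\,\text{d}\lambda \;=\; \int \mathbf{1}_{\{z:\|z\|_2>k\delta\}} g\,\text{d}\lambda.
\]
Since $g\in\mathcal{L}_1$ (being a pdf), the integrand on the right is dominated by $g$ and converges pointwise to $0$ as $k\rightarrow\infty$, because for any fixed $z\in\mathbb{R}^n$ we eventually have $\|z\|_2\le k\delta$. The Lebesgue dominated convergence theorem (equivalently, absolute continuity of the Lebesgue integral for $\mathcal{L}_1$ functions) then yields
\[
\lim_{k\rightarrow\infty}\int \mathbf{1}_{\{x:\|x\|_2>\delta\}} g_k\,\text{d}\lambda \;=\; 0,
\]
which is precisely the hypothesis of Lemma \ref{lem makarov 9.3.3}. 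Invoking that lemma gives its two conclusions for $\{g_k\}$, completing the proof.

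There is no real obstacle here: the argument is a routine rescaling combined with dominated convergence, and the whole proof reduces to one change of variables plus a line of reasoning about the tail mass of $g$.
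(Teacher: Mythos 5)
Your proof is correct and is essentially the same argument the paper defers to its cited source for (Theorem 4 of Ch.~20 of Cheney and Light): the substitution $z=kx$ shows each $g_k$ is a pdf and reduces the tail condition to $\lim_{k\rightarrow\infty}\int\mathbf{1}_{\left\{ z:\left\Vert z\right\Vert _{2}>k\delta\right\} }g\,\text{d}\lambda=0$, which holds by dominated convergence since $g\in\mathcal{L}_{1}$. Nothing further is needed.
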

\begin{lem}
	\label{lem C0 in Cb}The class $\mathcal{C}_{0}$ is a subset of $\mathcal{C}_{b}$.
	Furthermore, if $f\in\mathcal{C}_{0}$, then $f$ is uniformly continuous.
\end{lem}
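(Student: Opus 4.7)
The plan for Lemma \ref{lem C0 in Cb} is to dispose of the two claims separately, with both reducing to standard facts about continuous functions on compact sets once we use the definition of $\mathcal{C}_0$ to confine $f$ to a large-but-compact region outside of which it is uniformly small.

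For the inclusion $\mathcal{C}_0 \subset \mathcal{C}_b$, I would take $f \in \mathcal{C}_0$ and apply the definition with the choice $\epsilon = 1$ to obtain a compact $\mathbb{K} \subset \mathbb{R}^n$ with $\left\Vert f \right\Vert_{\mathcal{L}_\infty(\mathbb{K}^\complement)} < 1$. Since $f$ is continuous on the compact set $\mathbb{K}$, the extreme value theorem furnishes a finite $M$ with $\left|f(x)\right| \le M$ for all $x \in \mathbb{K}$. Combining the two bounds gives $\left\Vert f \right\Vert_{\mathcal{L}_\infty} \le \max(M,1) < \infty$, so $f \in \mathcal{C} \cap \mathcal{L}_\infty = \mathcal{C}_b$.

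For uniform continuity, given $\epsilon > 0$ I would first pick a compact $\mathbb{K}$ so that $\left|f(x)\right| < \epsilon/2$ on $\mathbb{K}^\complement$, and then enlarge it to the (still compact) $\mathbb{K}' = \{y \in \mathbb{R}^n : \mathrm{dist}(y,\mathbb{K}) \le 1\}$ so that any point within distance $1$ of $\mathbb{K}$ lies in $\mathbb{K}'$. The Heine--Cantor theorem applied to $f\big|_{\mathbb{K}'}$ yields a $\delta_1 > 0$ such that $\left|f(x) - f(y)\right| < \epsilon$ whenever $x,y \in \mathbb{K}'$ with $\left\Vert x - y \right\Vert_2 < \delta_1$. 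Setting $\delta = \min(\delta_1, 1)$, any pair $x,y \in \mathbb{R}^n$ with $\left\Vert x - y \right\Vert_2 < \delta$ splits into exactly two possibilities: either at least one of them is in $\mathbb{K}$, in which case the buffer distance forces both $x,y \in \mathbb{K}'$ and the Heine--Cantor bound applies; or both lie in $\mathbb{K}^\complement$, in which case the triangle inequality together with $\left|f\right| < \epsilon/2$ there gives $\left|f(x) - f(y)\right| < \epsilon$.

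The only subtlety is ensuring that the two cases above are exhaustive for pairs straddling the boundary of $\mathbb{K}$, which is exactly what the $1$-neighborhood buffer in the definition of $\mathbb{K}'$ is designed to handle; everything else is bookkeeping. I expect no real obstacle, since the argument is a textbook-style manipulation of the vanishing-at-infinity property against compactness.
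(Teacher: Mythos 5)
Your argument is correct. Both halves are sound: the $\epsilon=1$ truncation plus the extreme value theorem gives boundedness, and the buffered compact set $\mathbb{K}'$ with the two-case split (one point in $\mathbb{K}$ forces both into $\mathbb{K}'$; both outside $\mathbb{K}$ gives $\left|f(x)-f(y)\right|<\epsilon$ by the triangle inequality) is the standard Heine--Cantor argument for uniform continuity of functions vanishing at infinity. Note that the paper does not prove this lemma at all --- it is listed in the Appendix as an imported result, cited to Proposition 1.4.5 of Conway --- so there is no in-paper proof to compare against; your write-up supplies exactly the textbook argument that reference contains. The only point worth a passing remark is that $\left\Vert f\right\Vert _{\mathcal{L}_{\infty}\left(\mathbb{K}^{\complement}\right)}$ is an essential supremum, but since $f$ is continuous and $\mathbb{K}^{\complement}$ is open this coincides with the pointwise supremum, so your use of $\left|f(x)\right|<\epsilon/2$ on $\mathbb{K}^{\complement}$ is justified.
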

\begin{lem}
	[Urysohn's Lemma] \label{lem urysohn}If $\mathbb{K}\subset\mathbb{R}^{n}$
	is compact, then there exists some $g\in\mathcal{C}_{c}$, such that
	$0\le g\le1$ and $\mathbf{1}_{\mathbb{K}}g=1$.
\end{lem}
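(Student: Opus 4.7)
My plan is to give a direct, constructive proof exploiting the metric structure of $\mathbb{R}^{n}$, rather than invoking the general Urysohn construction for normal topological spaces. The key ingredient is that for any non-empty closed set $\mathbb{X}\subset\mathbb{R}^{n}$, the distance function $x\mapsto\text{dist}(x,\mathbb{X})=\inf\{\|x-y\|_{2}:y\in\mathbb{X}\}$ is $1$-Lipschitz (and hence continuous) and vanishes precisely on $\mathbb{X}$.

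First, I would exploit compactness of $\mathbb{K}$ to ensure that the support of my candidate is compact. Since $\mathbb{K}$ is bounded, there is some $R>0$ with $\mathbb{K}\subset\bar{\mathbb{B}}_{R}$, and then the thickening
\[
\mathbb{K}_{1}=\{x\in\mathbb{R}^{n}:\text{dist}(x,\mathbb{K})\le 1\}
\]
is closed and contained in $\bar{\mathbb{B}}_{R+1}$, hence compact by Heine--Borel.

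Second, I would define
\[
g(x)=\max\{0,\,1-\text{dist}(x,\mathbb{K})\}
\]
and verify the four required properties: (i) continuity, since $\text{dist}(\cdot,\mathbb{K})$ is continuous and $t\mapsto\max\{0,t\}$ is continuous, so the composition lies in $\mathcal{C}$; (ii) the bound $0\le g\le 1$, immediate from the definition; (iii) $g(x)=1$ for $x\in\mathbb{K}$, since $\text{dist}(x,\mathbb{K})=0$ for such $x$, which yields $\mathbf{1}_{\mathbb{K}}g=\mathbf{1}_{\mathbb{K}}$ (i.e., $g\equiv 1$ on $\mathbb{K}$); and (iv) $g$ vanishes on $\mathbb{K}_{1}^{\complement}$, so $\text{supp}(g)\subset\mathbb{K}_{1}$ is compact, placing $g$ in $\mathcal{C}_{c}$.

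There is essentially no obstacle here. The only non-trivial ingredient is the $1$-Lipschitz property of $\text{dist}(\cdot,\mathbb{K})$, which follows from the triangle inequality $\|x-z\|_{2}\le\|x-y\|_{2}+\|y-z\|_{2}$ after taking an infimum over $z\in\mathbb{K}$ and then symmetrising in $x,y$. Compared with the classical topological Urysohn construction (which proceeds via a dyadic sequence of separating open sets and a limiting procedure), this metric-space shortcut is considerably cleaner and yields the compact support conclusion essentially for free by using the sublevel set $\mathbb{K}_{1}$ as a built-in cut-off.
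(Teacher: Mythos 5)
Your construction is correct: $g(x)=\max\{0,\,1-\text{dist}(x,\mathbb{K})\}$ is continuous (the distance function is $1$-Lipschitz and $t\mapsto\max\{0,t\}$ is continuous), satisfies $0\le g\le 1$, equals $1$ on $\mathbb{K}$ because the distance vanishes there, and is zero outside the compact thickening $\mathbb{K}_{1}\subset\bar{\mathbb{B}}_{R+1}$, so $g\in\mathcal{C}_{c}$ exactly as the paper defines that class. There is, however, nothing in the paper to compare your argument against: Lemma \ref{lem urysohn} is stated in the Appendix as an imported technical result and its proof is deferred entirely to Corollary 1.2.9 of \cite{Conway:2012aa}. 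Your metric-space shortcut is the standard way to make the statement self-contained for subsets of $\mathbb{R}^{n}$; it buys a completely explicit formula and gets compact support for free from the sublevel-set cut-off, whereas the general topological Urysohn construction (dyadic families of separating open sets) would be needed only in spaces without a metric. The single cosmetic caveat is the degenerate case $\mathbb{K}=\emptyset$, where $\text{dist}(x,\emptyset)=+\infty$ under the usual convention; there the conclusion $\mathbf{1}_{\mathbb{K}}g=1$ is vacuous and one may take $g\equiv 0$, so nothing is lost.
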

\begin{lem}
	\label{lem covering}If $\mathbb{X}\subset\mathbb{R}^{n}$ is bounded,
	then for any $r>0$, $\mathbb{X}$ can be covered by $\bigcup_{i=1}^{m}\mathbb{B}\left(x_{i},r\right)$
	for some finite $m\in\mathbb{N}$, where $x_{i}\in\mathbb{R}^{n}$
	and $i\in\left[m\right]$.
\end{lem}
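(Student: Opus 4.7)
The plan is to reduce the covering problem to a standard compactness or explicit-grid argument. Since $\mathbb{X}\subset\mathbb{R}^{n}$ is bounded, there exists some $R>0$ (depending on $\mathbb{X}$) such that $\mathbb{X}\subset\left[-R,R\right]^{n}$. It therefore suffices to show that the closed cube $\left[-R,R\right]^{n}$ admits a finite covering by open balls of radius $r$, since any covering of the cube is a fortiori a covering of $\mathbb{X}$.

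First, I would fix $r>0$ and choose an integer $N\in\mathbb{N}$ large enough that $2R\sqrt{n}/N<r$. Partition each coordinate interval $\left[-R,R\right]$ into $N$ equal subintervals, which induces a partition of $\left[-R,R\right]^{n}$ into $N^{n}$ axis-aligned subcubes of side length $2R/N$. Each such subcube has diameter $2R\sqrt{n}/N<r$, so if $x_{i}$ denotes the center of the $i$-th subcube (with $i\in\left[N^{n}\right]$), then the subcube is contained in $\mathbb{B}\left(x_{i},r\right)$. Setting $m=N^{n}$ yields the required finite cover
\[
\left[-R,R\right]^{n}\subset\bigcup_{i=1}^{m}\mathbb{B}\left(x_{i},r\right)\supset\mathbb{X}\text{.}
\]

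An alternative, slicker approach would invoke compactness directly: the closure $\overline{\mathbb{X}}$ is closed and bounded in $\mathbb{R}^{n}$, hence compact by Heine-Borel. The family $\left\{\mathbb{B}\left(x,r\right):x\in\overline{\mathbb{X}}\right\}$ is then an open cover of $\overline{\mathbb{X}}$, and extracting a finite subcover gives the desired result. I would prefer the explicit grid construction, since it is self-contained, produces centers $x_{i}$ in an entirely constructive way, and does not rely on invoking Heine-Borel as a black box.

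There is no real obstacle to overcome here; the only mild bookkeeping is verifying that a cube of side $s$ embedded in $\mathbb{R}^{n}$ has diameter $s\sqrt{n}$ and hence fits inside a ball of radius $r$ about its center whenever $s\sqrt{n}/2<r$, which is handled by the choice of $N$ above. The finiteness of $m=N^{n}$ is immediate from the finiteness of $N$ and $n$.
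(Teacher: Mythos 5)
Your proof is correct. Note that the paper does not actually prove Lemma \ref{lem covering} at all: it is stated in the Appendix as an established technical result and sourced to Theorem 1.2.2 of Conway's \emph{A Course in Abstract Analysis} (total boundedness of bounded subsets of $\mathbb{R}^{n}$), so there is no in-paper argument to compare against. Your explicit grid construction is a complete and self-contained substitute: enclosing $\mathbb{X}$ in $\left[-R,R\right]^{n}$, subdividing into $N^{n}$ subcubes of side $2R/N$ with $2R\sqrt{n}/N<r$, and noting that each subcube lies in the open ball of radius $r$ about its center, is exactly the standard proof of this fact, and the diameter bookkeeping ($\text{diam}=s\sqrt{n}$ for a cube of side $s$, so every point of the subcube is within $s\sqrt{n}/2<r$ of the center) is handled correctly. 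The Heine--Borel alternative you mention is equally valid, though slightly less elementary; either version would serve the paper's purposes, where the lemma is used only to extract the finite partition $\Pi_{m}^{\delta}$ of $k\mathbb{K}$ in Lemma \ref{lem core lemma}.
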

\begin{lem}
	\label{lem p and r in q}If $0<p<q<r\le\infty$, then $\mathcal{L}_{p}\cap\mathcal{L}_{r}\subset\mathcal{L}_{q}$.
\end{lem}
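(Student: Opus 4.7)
The plan is to fix an arbitrary $f\in\mathcal{L}_p\cap\mathcal{L}_r$ and bound $\int|f|^q\,\text{d}\lambda$ in terms of the finite quantities $\|f\|_{\mathcal{L}_p}$ and $\|f\|_{\mathcal{L}_r}$. I would split on whether $r$ is finite, since the H\"older interpolation argument I have in mind degenerates when $r=\infty$.

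For the case $r=\infty$, the pointwise bound $|f|\le\|f\|_{\mathcal{L}_\infty}$ holds almost everywhere (by definition of the essential supremum). Writing $|f|^q=|f|^p\cdot|f|^{q-p}\le\|f\|_{\mathcal{L}_\infty}^{q-p}|f|^p$ and integrating gives $\|f\|_{\mathcal{L}_q}^q\le\|f\|_{\mathcal{L}_\infty}^{q-p}\|f\|_{\mathcal{L}_p}^p<\infty$, so $f\in\mathcal{L}_q$.

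For the case $r<\infty$, the plan is to realize $q$ as a harmonic interpolation exponent between $p$ and $r$. Since $p<q<r$, there is a unique $\theta\in(0,1)$ satisfying $\tfrac{1}{q}=\tfrac{\theta}{p}+\tfrac{1-\theta}{r}$. I would then factor $|f|^q=|f|^{q\theta}\cdot|f|^{q(1-\theta)}$ and apply H\"older's inequality with conjugate exponents $\alpha=p/(q\theta)$ and $\beta=r/(q(1-\theta))$; these satisfy $1/\alpha+1/\beta=1$ by construction of $\theta$, and both lie in $[1,\infty)$ because $q\theta\le p$ and $q(1-\theta)\le r$ (both inequalities follow immediately from the defining identity for $\theta$). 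This yields the Lyapunov log-convexity estimate $\|f\|_{\mathcal{L}_q}\le\|f\|_{\mathcal{L}_p}^{\theta}\|f\|_{\mathcal{L}_r}^{1-\theta}<\infty$, which gives $f\in\mathcal{L}_q$.

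No step presents a serious obstacle; the only points that demand care are verifying that $\theta\in(0,1)$ (which is exactly the strict ordering $p<q<r$ in the hypothesis) and checking that the two H\"older exponents $\alpha,\beta$ are admissible. The only mild subtlety is that the $r=\infty$ case must be handled separately, because the $\mathcal{L}_\infty$ norm is not captured by an $\int|f|^r$ integral and so cannot be fed into H\"older's inequality in the same form.
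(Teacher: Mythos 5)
Your proof is correct: the paper does not prove this lemma itself but cites it as Proposition 6.10 of Folland (1999), and your interpolation argument --- the pointwise bound $|f|\le\|f\|_{\mathcal{L}_\infty}$ when $r=\infty$, and H\"older's inequality with the exponent $\theta$ defined by $\tfrac{1}{q}=\tfrac{\theta}{p}+\tfrac{1-\theta}{r}$ when $r<\infty$ --- is exactly the standard proof given there. All the exponent checks you flag ($\theta\in(0,1)$ and the admissibility of $\alpha,\beta$) go through as you describe.
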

Let $\Gamma:\mathbb{R}\rightarrow\mathbb{R}$ be the usual gamma function,
defined as $\Gamma\left(z\right)=\int\mathbf{1}_{\left(0,\infty\right)}x^{z-1}\exp\left(-x\right)\text{d}\lambda$.
\begin{lem}
	\label{lem convolution bound}If $f\in\mathcal{L}_{p}$ and $g\in\mathcal{L}_{1}$,
	for $1\le p\le\infty$, then $f\star g$ exists and we have $\left\Vert f\star g\right\Vert _{\mathcal{L}_{p}}\le\left\Vert g\right\Vert _{\mathcal{L}_{1}}\left\Vert f\right\Vert _{\mathcal{L}_{p}}$.
\end{lem}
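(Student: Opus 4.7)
The plan is to split the argument into the three standard cases $p=1$, $p=\infty$, and $1<p<\infty$, since the techniques required are genuinely different. Throughout, I would write $\left(f\star g\right)(x)=\int f(x-y)g(y)\text{d}\lambda(y)$ and work to justify existence of the integral almost everywhere before bounding its $\mathcal{L}_p$-norm.

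For the boundary cases, both arguments are direct. When $p=1$, nonnegativity considerations (reducing to $|f|$ and $|g|$) together with Tonelli's theorem give
\[
\int\left|(f\star g)(x)\right|\text{d}\lambda(x)\le\int\int|f(x-y)||g(y)|\text{d}\lambda(y)\text{d}\lambda(x)=\left\Vert f\right\Vert_{\mathcal{L}_1}\left\Vert g\right\Vert_{\mathcal{L}_1},
\]
after swapping the order of integration and using translation invariance of $\lambda$. This simultaneously shows $f\star g$ is defined almost everywhere. When $p=\infty$, the bound $\left|(f\star g)(x)\right|\le\left\Vert f\right\Vert_{\mathcal{L}_\infty}\int|g(y)|\text{d}\lambda(y)$ holds pointwise wherever the integral converges, which is everywhere since $g\in\mathcal{L}_1$.

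For the main case $1<p<\infty$, the standard trick is to split the integrand using the conjugate exponent $q$ with $1/p+1/q=1$. Writing $|g(y)|=|g(y)|^{1/p}|g(y)|^{1/q}$ and applying Hölder's inequality inside the integral gives
\[
\left|(f\star g)(x)\right|\le\left(\int|f(x-y)|^{p}|g(y)|\text{d}\lambda(y)\right)^{1/p}\left(\int|g(y)|\text{d}\lambda(y)\right)^{1/q}.
\]
Raising to the $p$-th power, integrating in $x$, invoking Tonelli to exchange the order, and applying translation invariance to the inner $x$-integral of $|f(x-y)|^p$ yields
\[
\left\Vert f\star g\right\Vert_{\mathcal{L}_p}^{p}\le\left\Vert g\right\Vert_{\mathcal{L}_1}^{p/q}\left\Vert f\right\Vert_{\mathcal{L}_p}^{p}\left\Vert g\right\Vert_{\mathcal{L}_1}=\left\Vert g\right\Vert_{\mathcal{L}_1}^{p}\left\Vert f\right\Vert_{\mathcal{L}_p}^{p},
\]
since $p/q+1=p$. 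Taking $p$-th roots produces the claimed inequality, and finiteness of the right-hand side implies $(f\star g)(x)$ is finite for $\lambda$-almost every $x$, so existence is established simultaneously.

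The only real obstacle is the case $1<p<\infty$, and specifically finding the right weighting in the Hölder splitting; once the splitting $|g|=|g|^{1/p}|g|^{1/q}$ is chosen, the exponent bookkeeping (so that $p/q+1=p$) falls into place cleanly. Measurability of $(x,y)\mapsto f(x-y)g(y)$ (needed to invoke Tonelli) follows from measurability of $f$ and $g$ and the fact that $(x,y)\mapsto x-y$ is continuous, a routine but necessary check I would mention in passing.
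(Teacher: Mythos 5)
Your proof is correct: the three-case split and, in particular, the H\"older splitting $\left|g\right|=\left|g\right|^{1/p}\left|g\right|^{1/q}$ with the exponent bookkeeping $p/q+1=p$ is the standard argument for Young's inequality in the form $\left\Vert f\star g\right\Vert _{\mathcal{L}_{p}}\le\left\Vert g\right\Vert _{\mathcal{L}_{1}}\left\Vert f\right\Vert _{\mathcal{L}_{p}}$, and your observation that finiteness of the resulting iterated integral yields existence of $f\star g$ almost everywhere closes the existence claim. The paper itself offers no proof of this lemma --- it is listed among the unproved technical results and attributed to Theorem 9.3.1 of Makarov and Podkorytov --- so there is nothing to compare against beyond noting that your argument is precisely the textbook one that the citation points to.
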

\begin{lem}
	\label{lem Donahue}Let $\mathcal{G}\subset\mathcal{L}_{p}$, for
	some $1\le p<\infty$, and let $f\in\overline{\text{Conv}}\left(\mathcal{G}\right)$.
	For any $K>0$, such that $\left\Vert f-\alpha\right\Vert _{\mathcal{L}_{p}}<K$,
	for all $\alpha\in\mathcal{G}$, there exists a $h_m\in\text{Conv}_{m}\left(\mathcal{G}\right)$,
	such that
	\[
	\left\Vert f-h_m\right\Vert _{\mathcal{L}_{p}}\le\frac{C_{p}K}{m^{1-1/\alpha}}\text{,}
	\]
	where $\alpha=\min\left\{ p,2\right\} $, and
	\[
	C_{p}=\begin{cases}
	1 & \text{if }1\le p\le2\text{,}\\
	\sqrt{2}\left[\sqrt{\pi}\Gamma\left(\frac{p+1}{2}\right)\right]^{1/p} & \text{if }p>2\text{.}
	\end{cases}
	\]
\end{lem}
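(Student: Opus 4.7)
\textbf{Plan for the proof of Lemma~\ref{lem Donahue}.}

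The argument will follow the probabilistic Maurey--Barron--Jones scheme, in the Banach-space generalization due to Donahue, Gurvits, Darken, and Sontag. The idea is to represent the target $f$ as the expectation of a $\mathcal{G}$-valued random element and then bound the $\mathcal{L}_p$-error of its empirical average via a moment inequality for sums of independent centered real-valued random variables.

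First, I would reduce to the case where $f$ is itself a finite convex combination. By definition of the closed convex hull, for each $\eta>0$ there exists $g_\eta=\sum_{j=1}^{N}\lambda_j\alpha_j$ with $\alpha_j\in\mathcal{G}$, $\lambda\in\mathbb{S}^{N-1}$, and $\|f-g_\eta\|_{\mathcal{L}_p}<\eta$, so the triangle inequality transfers the hypothesis to $\|g_\eta-\alpha\|_{\mathcal{L}_p}<K+\eta$ for every $\alpha\in\mathcal{G}$. Introduce the $\mathcal{G}$-valued random variable $Z$ with distribution $\mathbb{P}(Z=\alpha_j)=\lambda_j$, draw i.i.d.\ copies $Z_1,\dots,Z_m$, and set $\bar Z_m=\frac{1}{m}\sum_{i=1}^{m}Z_i$. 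Then $\bar Z_m\in\text{Conv}_m(\mathcal{G})$ surely and $\mathbb{E}\bar Z_m=g_\eta$.

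The heart of the argument is estimating $\mathbb{E}\|\bar Z_m-g_\eta\|_{\mathcal{L}_p}^{p}$. By Fubini,
\[
\mathbb{E}\|\bar Z_m-g_\eta\|_{\mathcal{L}_p}^{p}=\int\mathbb{E}\bigl|\tfrac{1}{m}\sum_{i=1}^{m}(Z_i(x)-g_\eta(x))\bigr|^{p}\,\text{d}\lambda(x),
\]
reducing matters to a pointwise bound on the $p$-th absolute moment of an average of i.i.d.\ centered real random variables. For $1\le p\le 2$, the type-$p$ property of $\mathcal{L}_p$ (in sharp form the von~Bahr--Esseen inequality $\mathbb{E}|\sum_i Y_i|^p\le\sum_i\mathbb{E}|Y_i|^p$) yields $\mathbb{E}\|\bar Z_m-g_\eta\|_{\mathcal{L}_p}^{p}\le m^{1-p}\mathbb{E}\|Z-g_\eta\|_{\mathcal{L}_p}^{p}\le m^{1-p}(K+\eta)^{p}$, which matches the exponent $p(1-1/\alpha)=p-1$ with $\alpha=p$ and constant $C_p=1$. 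For $p>2$ I would symmetrize, writing
\[
\mathbb{E}\|\bar Z_m-g_\eta\|_{\mathcal{L}_p}\le \mathbb{E}\bigl\|\tfrac{1}{m}\sum_{i=1}^{m}\epsilon_i(Z_i-Z_i')\bigr\|_{\mathcal{L}_p}
\]
with independent copies $(Z_i')$ and Rademacher signs $(\epsilon_i)$, then apply Khintchine's inequality with Haagerup's sharp constant pointwise inside a Fubini swap, and finish with Minkowski's inequality in $\mathcal{L}_{p/2}$ applied to $\sum_i (Z_i-Z_i')^{2}$; this is the step that produces the explicit factor $\sqrt{2}[\sqrt{\pi}\,\Gamma((p+1)/2)]^{1/p}$, with $\alpha=2$ and exponent $p(1-1/\alpha)=p/2$.

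Finally, by the probabilistic existence principle some realization of $\bar Z_m$ attains the expected bound, producing $h_m^{(\eta)}\in\text{Conv}_m(\mathcal{G})$ with $\|f-h_m^{(\eta)}\|_{\mathcal{L}_p}\le \eta+C_p(K+\eta)/m^{1-1/\alpha}$; letting $\eta\downarrow 0$ along a sequence and exploiting compactness of the coefficient simplex once the supporting atoms are held fixed along a subsequence yields the desired $h_m$ with $\|f-h_m\|_{\mathcal{L}_p}\le C_pK/m^{1-1/\alpha}$. The main obstacle will be the $p>2$ case: recovering the explicit $C_p$ with its exact Gamma-function form requires the sharp Khintchine--Haagerup inequality and careful bookkeeping across the symmetrization and the $\mathcal{L}_{p/2}$ triangle step, whereas a crude Marcinkiewicz--Zygmund bound would yield the correct rate but a suboptimal constant.
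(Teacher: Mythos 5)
First, a point of comparison: the paper does not prove Lemma~\ref{lem Donahue} at all. It is imported verbatim as Corollary~2.6 of \cite{Donahue:1997aa} (see the ``Sources of results'' appendix), so your sketch is really being measured against that reference rather than against anything in this article. Your overall scheme --- reduce to a finite convex combination $g_\eta$, sample i.i.d.\ atoms $Z_1,\dots,Z_m$ from the mixing weights, bound $\mathbb{E}\left\Vert \bar{Z}_m-g_\eta\right\Vert _{\mathcal{L}_p}^{p}$ via Fubini and moment inequalities for sums of independent centred variables, and conclude by the probabilistic existence principle --- is the standard Maurey--Barron--Jones empirical method and is the right family of ideas for this result.

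There are, however, two genuine gaps. (i) The step you lean on for $1\le p\le 2$, namely $\mathbb{E}\left|\sum_{i}Y_{i}\right|^{p}\le\sum_{i}\mathbb{E}\left|Y_{i}\right|^{p}$ for independent \emph{centred} real variables, is false as stated: the von Bahr--Esseen inequality carries the constant $2-n^{-1}$, and the constant $1$ is only available for \emph{symmetric} summands (via $\left|a+b\right|^{p}+\left|a-b\right|^{p}\le2\left|a\right|^{p}+2\left|b\right|^{p}$). The variables $Y_i=Z_i(x)-g_\eta(x)$ arising here are centred but not symmetric, and one can construct two-point counterexamples for $1<p<2$ where the constant-$1$ inequality fails. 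Your argument therefore leaks a factor of order $2^{1/p}$ and does not recover $C_{p}=1$; obtaining the stated constant requires a different device (in \cite{Donahue:1997aa} the sharp constants come from a more careful analysis tied to the modulus of smoothness/type structure of $\mathcal{L}_p$, not from a one-line von Bahr--Esseen bound). (ii) The closing limit $\eta\downarrow0$ is not justified: as $\eta$ varies the supporting atoms $\alpha_{j}$ change, $\mathcal{G}$ carries no compactness hypothesis, and ``holding the atoms fixed along a subsequence'' is not something you are entitled to. What your argument actually delivers is $\inf_{h\in\text{Conv}_{m}\left(\mathcal{G}\right)}\left\Vert f-h\right\Vert _{\mathcal{L}_p}\le C_{p}K/m^{1-1/\alpha}$, i.e.\ the bound on the distance rather than on a realised $h_{m}$; to phrase it as an existence statement one should either keep the $K+\eta$ slack or state the lemma for the infimum. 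Neither gap affects how the lemma is used in this paper --- the proof of Theorem~\ref{thm main res}(c) only needs the rate $m^{-(1-1/\alpha)}$ with \emph{some} finite constant --- but as a proof of the lemma as stated, with its exact constants, the proposal is incomplete.
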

\begin{lem}
	\label{lem Bartle}In any measure $\nu$, uniform convergence implies
	almost uniform convergence, and almost uniform convergence implies
	almost everywhere convergence and convergence in measure, with respect
	to $\nu$.
\end{lem}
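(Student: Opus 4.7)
The plan is to verify the three implications separately by unpacking the definitions directly: uniform convergence is a special case of almost uniform convergence, almost uniform convergence exhibits a nested exceptional set whose countable intersection is null, and the same nested structure immediately gives the tail estimate required for convergence in measure. Since the three implications are genuinely independent in content, I would treat them in sequence, reusing notation where possible.

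First I would record the definitions we need. Writing $f_m \to f$ for the sequence at hand, almost uniform convergence with respect to $\nu$ means that for every $\epsilon>0$ there exists a measurable set $E_\epsilon$ with $\nu(E_\epsilon)<\epsilon$ such that $\sup_{x \notin E_\epsilon}\lvert f_m(x)-f(x)\rvert \to 0$. Convergence in measure means that for every $\eta>0$, $\nu(\{x:\lvert f_m(x)-f(x)\rvert\ge\eta\})\to 0$. For the first implication, if $f_m\to f$ uniformly on $\mathbb{R}^n$ then I can take $E_\epsilon=\emptyset$ for every $\epsilon>0$, giving $\nu(E_\epsilon)=0<\epsilon$ trivially, so uniform convergence is almost uniform convergence with the most generous possible exceptional set.

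Next I would handle the implication to almost everywhere convergence. The standard trick is to apply the almost uniform hypothesis with $\epsilon=1/k$ for each $k\in\mathbb{N}$ to obtain sets $E_k$ with $\nu(E_k)<1/k$ on whose complement $f_m\to f$ uniformly, hence pointwise. Setting $E=\bigcap_{k=1}^{\infty}E_k$, convergence holds on $E^c=\bigcup_k E_k^c$ and monotonicity of $\nu$ gives $\nu(E)\le\nu(E_k)<1/k$ for each $k$, so $\nu(E)=0$, proving almost everywhere convergence. For convergence in measure, fix $\eta>0$ and $\delta>0$; apply almost uniform convergence with $\epsilon=\delta$ to obtain $E_\delta$ with $\nu(E_\delta)<\delta$ on whose complement the convergence is uniform. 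Uniform convergence on $E_\delta^c$ supplies an $m_0$ such that for $m\ge m_0$ the set $\{\lvert f_m-f\rvert\ge\eta\}$ is contained in $E_\delta$, and monotonicity gives $\nu(\{\lvert f_m-f\rvert\ge\eta\})<\delta$. Since $\delta$ was arbitrary, the measure of the bad set tends to $0$.

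There is no real obstacle here; the only subtlety worth flagging is that the exceptional set in the almost everywhere argument must be written as a countable intersection of the $E_k$ rather than a union, so that the measure bound $\nu(E)\le\inf_k\nu(E_k)=0$ is available via monotonicity rather than via countable subadditivity (which would only give $\sum 1/k=\infty$). I would emphasise this point and note that no $\sigma$-finiteness or completeness hypothesis on $\nu$ is needed for any of the three implications, since each uses only monotonicity of $\nu$ and the uniform convergence on the good set supplied by the hypothesis.
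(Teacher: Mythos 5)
Your proof is correct. Note that the paper itself does not prove this lemma: it is stated in the appendix as a collected technical result and deferred entirely to Bartle (1995) (the definition of almost uniform convergence together with Lemma 7.10 and Theorem 7.11 there). Your three arguments --- taking $E_\epsilon=\emptyset$ for the first implication, intersecting the sets $E_k$ with $\nu(E_k)<1/k$ (rather than taking a union, as you rightly flag) for almost everywhere convergence, and using the inclusion $\{\lvert f_m-f\rvert\ge\eta\}\subset E_\delta$ for large $m$ for convergence in measure --- are exactly the standard ones the cited source supplies, so there is nothing to add.
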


\section{Sources of results}

Lemma \ref{lem makarov 9.3.3} is reported as Theorem 9.3.3 in \cite{Makarov:2013aa}
(see also Theorem 2 of \cite[Ch. 20]{CheneyLight2000}). The proof
of Corollary \ref{cor cheney ch20 th4} can be taken from that of
Theorem 4 of \cite[Ch. 20]{CheneyLight2000}. Lemma \ref{lem C0 in Cb}
appears in \cite{Conway:2012aa}, as Proposition 1.4.5. Lemma \ref{lem urysohn}
is taken from Corollary 1.2.9 of \cite{Conway:2012aa}. Lemma \ref{lem covering}
appears as Theorem 1.2.2 in \cite{Conway:2012aa}. Lemma \ref{lem p and r in q}
can be found in \cite[Prop. 6.10]{Folland:1999aa}. Lemma \ref{lem convolution bound}
can be found in \cite[Thm. 9.3.1]{Makarov:2013aa}. Lemma \ref{lem Donahue}
appears as Corollary 2.6 in \cite{Donahue:1997aa}. Lemma \ref{lem Bartle}
can be obtained from the definition of almost uniform convergence,
Lemma 7.10, and Theorem 7.11 of \cite{Bartle:1995aa}.

\section*{Acknowledgment}

HDN is personally funded by Australian Research Council (ARC) grant DE170101134. HDN and GJM are supported by ARC grant DP180101192. FC is supported by Agence Nationale de la Recherche (ANR) grant SMILES ANR-18-CE40-0014 and by R\'egion Normandie grant RIN AStERiCs.

\bibliographystyle{plain}
\bibliography{20190202_mixtures_denseness}

\end{document}